\newtheorem{theorem}{Theorem}
\newtheorem{lemma}[theorem]{Lemma}
\theoremstyle{definition}
\newtheorem{remark}[theorem]{Remark}
\numberwithin{equation}{section}
\renewcommand{\d}{\,\mathrm{d}}
\newcommand{\e}{\mathrm{e}}
\newcommand{\epsi}{\varepsilon}
\newcommand{\partialw}{\partial_\mathcal{W}}
\newcommand{\Rz}{{\mathbb R}}
\newcommand{\dt}{\,\mathrm{d}t}
\newcommand{\ds}{\,\mathrm{d}s}
\newcommand{\bphi}{{\bar{\phi}_\mu}}
\renewcommand{\nu}{}
\begin{document}
	
	\title[WIDE approach to doubly nonlinear waves]{Weighted Inertia-Dissipation-Energy approach to
          \\
           doubly nonlinear wave equations}
	
	\author{Goro Akagi}
	\address[Goro Akagi]{Mathematical Institute and Graduate School of Science, Tohoku University, Aoba, Sendai 980-8578, Japan }
	\email{goro.akagi@tohoku.ac.jp} 
	\urladdr{
          http://www.math.tohoku.ac.jp/~akagi/}
        
		\author{Verena B\"ogelein}
	\address[Verena B\"ogelein]{Universit\"at Salzburg,
Fachbereich Mathematik,
Hellbrunner Stra\ss e 34,
5020 Salzburg, Austria}
	\email{verena.boegelein@plus.ac.at} 
\urladdr{https://www.plus.ac.at/mathematics/department/staff/boegelein-verena/?lang=en}
        
	\author{Alice Marveggio}
	\address[Alice Marveggio]{
		Hausdorff Center for Mathematics (HCM), Universit\"at Bonn, Endenicher Allee 60, 53115 Bonn, Germany.}
	\email{alice.marveggio@hcm.uni-bonn.de}
	\urladdr{https://alicemarveggio.github.io/}
	
	\author{Ulisse Stefanelli}
	\address[Ulisse Stefanelli]{University of Vienna, Faculty of Mathematics,
		Oskar-Morgenstern-Platz 1, 1090 Wien, Austria, and Vienna Research Platform on Accelerating Photoreaction Discovery, University of Vienna, W\"ahringerstrasse 17, A-1090 Vienna, Austria, and  Istituto di Matematica
		Applicata e Tecnologie Informatiche
                \textit{{E. Magenes}}, v. Ferrata 1, 27100
		Pavia, Italy.}
	\email{ulisse.stefanelli@univie.ac.at}
	\urladdr{http://www.mat.univie.ac.at/~stefanelli}

	\subjclass[2020]{35A15, 35L05, 47J35}
	\keywords{WIDE functionals, variational approach, causal limit, viscous limit} 
	
        \begin{abstract}
       	We discuss a variational approach to  doubly nonlinear wave equations of the form $\rho u_{tt} +  
        g (u_t) - \Delta u +  f (u)=0$. This approach hinges on the
        minimization of a parameter-dependent family of uniformly
        convex functionals over entire trajectories, namely the so-called Weighted Inertia-Dissipation-Energy (WIDE)
	functionals. 
        We prove that the WIDE functionals admit minimizers and that the corresponding Euler-Lagrange system is 
	solvable in the strong sense. Moreover, we check
        that the parameter-dependent minimizers converge, up to subsequences,
        to a solution of the target  doubly nonlinear wave
        equation as the parameter goes to $0$.   
        The analysis relies on
        specific estimates on the WIDE minimizers, on the decomposition of
        the subdifferential of the WIDE functional, and on the
        identification of the nonlinearities in the limit. Eventually,
        we investigate the viscous limit  $\rho \to 0$,  both
        at the functional level and on  that of  the equation.
\end{abstract}
\maketitle


\date{}


\section{Introduction}

The semilinear wave equation $\rho u_{tt}-\Delta u + f  (u)=0$ in the space-time domain $\Omega \times (0,T)$ with $\Omega
\subset \Rz^d$,  $\rho >0$, and $ f =  F'$ monotone can be addressed variationally
by considering minimizers of the global-in-time functionals
$$u \mapsto \int_0^T \int_\Omega \e^{-t/\epsi}\left(\frac{\epsi^2\rho}{2}
  |u_{tt}|^2 + \frac12 |\nabla u|^2 + F(u) \right) \, \d x \,
\d t$$
under given initial and boundary conditions. Indeed, minimizers $u_\epsi$ of the latter
converge, up to subsequences, to solutions of the semilinear wave
equation as $\epsi \to 0$. This is the content of a celebrated conjecture by  {\sc
  De~Giorgi} on the variational resolution of hyperbolic
problems \cite{Degiorgi96},
which has been settled in \cite{St2011} for $T<\infty$ and in
\cite{SeTi2012} in its original formulation with $T=\infty$. The interest of this
approach relies in reformulating the differential problem in terms
of a uniformly convex minimization problem, combined with a limit
passage. This ultimately delivers a variational approximation methodology
for nonlinear wave equations.

Starting from this first positive results, the reach of the De~Giorgi
conjecture has been extended to other classes of
nonlinear hyperbolic problems \cite{Serra-Tilli14}. These include   
nonhomogeneous forcing terms  
\cite{tenta2,tenta3}, general mechanical systems \cite{LiSt}, and
time-dependent domains \cite{DalMaso20}.
The aim of this paper is to consider the
extension of the variational approach to  the case of a nonlinearly
damped wave equation of the form
\begin{equation}\label{eq:dampe}
  \rho u_{tt} +  
  \nu g (u_t) - \Delta u +f (u)=0.
\end{equation}
In addition to the nonlinearity $ f (u)$ on $u$, the latter equation features a second
nonlinear dissipation term $g(u_t)$ with $g$ monotone 
making it a {\it doubly
  nonlinear} wave equation. Correspondingly, the global-in-time
functionals take the form
$$I_{\rho\epsi}: u \mapsto \int_0^T \int_\Omega \e^{-t/\epsi}\left(\frac{\epsi^2\rho}{2}
  |u_{tt}|^2 + \epsi\nu G(u_t)+ \frac12 |\nabla u|^2 + F(u) \right) \, \d x \,
\d t$$
where $G'=g$. 
These functionals feature the weighted sum (via the exponential weight
$t\mapsto \e^{-t/\epsi}$ and powers of the parameter $\epsi$) of an
{\it inertial} term $\rho| u_{tt}|^2/2$, a {\it dissipation} term $\nu
G (u_t)$, and an {\it energy} term $F(u)$. Such global-in-time
functionals are hence usually
referred to as being of \emph{Weighted Inertia-Dissipation-Energy} (WIDE)
type. Correspondingly, the above-mentioned variational approximation strategy
of minimizing the WIDE functionals and then passing to the limit $\epsi \to
0$ is called the {\it WIDE approach}. The relation between the
minimization of $I_{\rho\epsi}$ and the solution to \eqref{eq:dampe}
is revealed by computing the Euler-Lagrange equation for
$I_{\rho\epsi}$. Postponing all necessary details to the coming
sections, we anticipate that this results in the following
fourth-order elliptic-in-time equation
\begin{equation} \label{eq:EL}
\epsi^2 \rho u_{tttt} - 2 \epsi \rho u_{ttt} + \rho u_{tt} -\epsi  \nu
 ( g (u_t))_t + \nu g (u_t) -\Delta u +  f (u) =0.
\end{equation}
In particular, by formally taking the limit $\epsi\to 0$ one recovers the doubly
nonlinear equation \eqref{eq:dampe}. The minimization of
$I_{\rho\epsi}$  hence corresponds to an elliptic-in-time
regularization of \eqref{eq:dampe}. Note that the Euler-Lagrange
equation \eqref{eq:EL} is
{\it not causal}, as its solution $u_\epsi$ at a given time $t$
depends on its values on the interval $(t,T)$ as
well. Causality is restored in the limit $\epsi\to 0$, which is
hence referred to as {\it causal limit} in this context.

The WIDE approach for \eqref{eq:dampe} has already been investigated  
for quadratic  $\psi$. In this case, the resulting limiting problem
\eqref{eq:dampe} is a linearly damped semilinear wave equation. The
amenability of this variational approximation procedure has been
ascertained both in the case $T<\infty$ \cite{LiSt2013}  and for
$T=\infty$ \cite{Serra-Tilli14}. In taking the limit $\epsi \to 0$, 
the linearity of the dissipation
term $ g(u_t)$ is crucially used in
\cite{LiSt2013,Serra-Tilli14}. In particular, the identification
of the nonlinearity $f(u)$ follows by compactness. 

The focus of this paper is in extending the WIDE theory to the
genuinely doubly nonlinear setting by letting $\psi$ be not quadratic.
Indeed, we assume $\psi$ to be convex and of $p$-growth, for
some $2 \leq p < 4$. 
This calls for a number of delicate
extensions of the available arguments. First, the problem will be
abstractly reformulated in Banach spaces, as opposed to the Hilbertian
formulations of \cite{LiSt2013,Serra-Tilli14}. Secondly, in passing to
the limit as $\epsi \to 0$ one needs to identify two
limits. Compactness will still enable to identify the limit in
$f (u)$. For identifying $g(u_t)$ one uses  a lower semicontinuity
argument instead. This is challenging due to the hyperbolic nature
of the problem. At this point, let us refer to \cite{Davoli}, where
the case of a positively $1$-homogeneous $\psi$ (but with $ f =0$) has
been considered in the context of dynamic plasticity, with the help of
tools for rate-independent flows \cite{Mielke-Roubicek}.

Our first main result is the amenability of the WIDE approach in this
doubly nonlinear hyperbolic setting. {\bf Theorem \ref{main_thm}} 
states that, under suitable assumptions, the functionals $I_{\rho\epsi}$ admit
unique minimizers $u_\epsi$, that they are strong solutions of the
Euler-Lagrange equation \eqref{eq:EL},
and that $u_\epsi$ converge to solutions of the doubly nonlinear wave
equation \eqref{eq:dampe} as $\epsi \to 0$, up to subsequences.

We then turn to the investigation of the so-called viscous limit
$\rho\to 0$. This can be alternatively discussed at the level of the functionals
$I_{\rho\epsi}$ or at the level of their minimizers, which we now indicate with
$u_{\rho\epsi}$. {\bf Theorem \ref{viscous_thm}} states that one can take
any limit $(\rho,\epsi) \to (\rho_0,\epsi_0)$ and prove the
convergence of the respective trajectories $u_{\rho\epsi}$ to the
limiting one
$u_{\rho_0\epsi_0}$. In particular, for $(\rho,\epsi) \to (0,0)$ the
minimizers $u_{\rho\epsi}$ converge to the unique solution of the
doubly nonlinear equation $\nu  g (u_t)-\Delta u + f (u) =0$.

Before closing this introduction, let us report on the literature
related to the {\it parabolic} version of the WIDE approach. In
fact,
elliptic-regularization nonvariational techniques for
nonlinear parabolic PDEs are classical
and can be traced
back  to Lions \cite{Lions:63,Lions:63a,Lions:65}, see also 
by Kohn \& Nirenberg \cite{Kohn65},  Olein\u{i}k
\cite{Oleinik}, and 
 the book by {Lions \& Magenes} 
 \cite{Lions-Magenes1}. An early result in a
 nonlinear setting is by Barbu \cite{Barbu75}.

The WIDE approach in the parabolic setting has been pioneered by
Ilmanen \cite{Ilmanen}, for the mean-curvature
flow of varifolds, and Hirano \cite{Hirano94}, for periodic
solutions of gradient flows. Note that  WIDE functionals are mentioned
in the classical textbook by  Evans
\cite[Problem 3, p.~487]{Evans98}.

A variety of different parabolic abstract problems have been tackled by the
WIDE approach, including gradient flows
\cite{akno,BDM15,ms3},
rate-independent flows \cite{MiOr2008,ms2}, doubly-nonlinear flows
\cite{AkMeSt2018,AKSt2010,AkSt2011,AkSt3,Me2}, nonpotential perturbations
\cite{akme,Me} and variational approximations \cite{LiMe}, curves of maximal slope
in metric spaces \cite{RoSaSeSt,RoSaSeSt2,segatti}, and parabolic
SPDEs \cite{ScarStef}. On the more applied side, the WIDE approach has been
applied to microstructure evolution \cite{CO08}, crack propagation
\cite{Larsen-et-al09},
mean curvature flow
\cite{Ilmanen,SpSt}, dynamic plasticity \cite{Davoli}, and the incompressible
Navier-Stokes system \cite{Bathory,OSS}.

This is the plan of the paper. 
We formulate the problem in abstract spaces, collect
assumptions, and formulate our main results, Theorems
\ref{main_thm}-\ref{viscous_thm} in Section
\ref{sec_assumptions}. After collecting some preliminary material in
Section \ref{sec_preliminary}, we prove the existence of a solution to
the Euler-Lagrange problem in Section \ref{sec_existence}. This calls
for an approximation of the WIDE functionals based on the
Moreau-Yosida regularization, the characterization
of their subdifferential, and a limiting procedure with
respect to the approximation parameter. We eventually prove in
Subsection
\ref{sec_minimum} that the WIDE functional admits minimizers. The
passage to the causal limit $\epsi \to 0$ is detailed in Section
\ref{sec_causallimit}. Eventually, the viscous limit $\rho \to 0$ and
its combination with the causal limit $\epsi \to 0$ are discussed in
Section \ref{sec_viscouslimit}.

\section{Assumptions and main results}\label{sec_assumptions}

In this section, we present an abstract formulation for the doubly
nonlinear wave equation \eqref{eq:dampe} and state our main results Theorems
\ref{main_thm} and \ref{viscous_thm}.
Let us start by fixing some  assumptions, which will hold
throughout the paper.

Let $d \in \{2,3\}$ and $\Omega \subset \Rz^d$ be a nonempty, open,
bounded, and Lipschitz domain, and
$V\equiv L^p(\Omega)$ for $2 \leq p < 4$ . 
Moreover, let $H= L^2(\Omega)$ and $ X \equiv H^1_0(\Omega)$, so that $X
\subset V$ densely and compactly. We identify $H=H^\ast$ (dual space), use the symbol $\left\langle \cdot,\cdot
\right\rangle $ for both the duality in $H$ and the duality pairing between $V^{\ast}\equiv L^{p'}(\Omega)$ and $V$, and denote by  $\left\langle \cdot,\cdot
\right\rangle _{X}$ the duality pairing between $X^*$ and $X$.
The symbols $\| \cdot \|$ and $\| \cdot \|_E$ indicate the norms in $H$ and in the generic Banach space $E$, respectively.


We are concerned with the analysis of  WIDE approach 
 to the abstract nonlinear hyperbolic 
Cauchy problem defined as 
\begin{align}
&\rho u''+ \nu \xi(t)+\eta(t)   =0\text{ \ \ \ \ in } V^\ast  \text{ for a.e. }t
 \in (0,T)\text{,}\label{prob1}\\
&\xi(t)   =\mathrm{d}_{V}\psi(u'(t))\text{ \ \ in } V^\ast   \text{ for
	a.e. } t \in (0,T)\text{,}\label{prob2}\\
&\eta(t)   \in\partial\phi(u(t))\text{ }\ \ \ \text{in\ } V^\ast   \text{ for
	a.e. } t \in (0,T) \text{,}\label{prob3}\\
&u(0)   =u_{0}\text{, } \label{prob4}\\
&\rho u'(0)   = \rho u_{1}\text{. } \label{prob5}%
\end{align}
Here, $T > 0$ is some reference final time, the prime
denotes time differentiation, and $\rho$ is a positive
parameter  ($\rho\to 0$ will be considered in Section \ref{sec_viscouslimit}
below).  
The convex functionals $\psi, \phi :V\rightarrow[0,\infty)$ are 
given as 
\begin{align}
  \psi(v)&=
               \left\{
               \begin{array}{ll}
               \displaystyle  \int_\Omega G(v)\, \d x \quad&\text{if} \  G\circ
                                                   v \in
                                                   L^1(\Omega),\\
                 \infty \quad&\text{otherwise},
               \end{array}
                               \right.  \label{def_psi}\\
  \label{def_phi}
\phi (u) &=  \int_{\Omega}  \Big(  \frac12 |\nabla u|^2 +  F(u)\Big)
           \d x\quad \forall u \in V.
\end{align}
We 
denote by $\mathrm{d}_{V}$ the
G\^{a}teaux derivative  and by $\partial $ the subdifferential in
the sense  of convex analysis. 
Finally, $u_0 \in X$ and $u_1  \in X \cap L^{q'}(\Omega)$ with $q'=\frac{2p}{4-p}$ are given initial data. Moreover, we introduce the conjugate exponent $q$ such that $1/q + 1/{q'}=1$.

In the following, we assume $\psi:V\rightarrow\lbrack0,\infty)$ to be twice Gateaux differentiable,  convex, and of $p$-growth. In particular, we assume
that there exists a constant $C_1>0$ such that the following conditions hold
\begin{align}
&\|u\|_{V}^{p}\leq C_1(\psi(u)+1), \ \ \forall u\in V\text{, }\psi(0)=0\text{;}%
\label{coercivity psi}\\
&\|\mathrm{d}_{V}\psi(u)\|_{V^{\ast}}^{p'}\leq C_1(\|u\|_{V}^{p}+1), \ \ \forall u\in V\text{, }p'=p/(p-1);\label{growth condition dpsi} \\
&  \|\operatorname{Hes}(\psi(u))\|^{p''}_{L^{ p''}(\Omega)} \leq C_1(\|u\|_{V}^{p}+1), \ \ \forall u\in V \text{, }p''=p/(p-2), \text{ for }p>2; \label{growth condition hespsi} \\
&  \|\operatorname{Hes}(\psi(u))\|_{L^{ \infty}(\Omega)} \leq C_1, \ \ \forall u\in V \text{, }\text{ for }p=2, \label{growth condition hespsi2}
\end{align}
where $\operatorname{Hes}(\psi)$ stands for the Hessian of $\psi$. 
As a consequence, there exists a constant $C_2>0$ such that
\begin{align}
|u|_{V}^{p}  &  \leq C_2(\left\langle \mathrm{d}_{V}\psi
(u),u\right\rangle +1)\ \ \forall u\in V\text{,}%
\label{coercivity dpsi}\\
\psi(u)  &  \leq\psi(0)+\left\langle \mathrm{d}_{V}\psi(u),u\right\rangle \leq C_2\left(  \|u\|_{V}^{p}+1\right) \ \ \forall u\in
V\text{.} \label{coercivity psi2}%
\end{align}

We assume 
$F\in C^1(\mathbb{R})$ to be convex and $f=  F' \in
C(\mathbb{R})$ to have polynomial growth of order $r-1$, for \( r \in [1,p]\).  
In particular, we ask for some constant $C_3>0$ such that
\begin{align}
\frac{1}{C_3}|v|^r \leq  F(v) + C_3\quad \text{and} \quad |f(v)|^{r'}\leq C_3(1+|v|^r) \quad  \text{for all }v \in \Rz,
\label{eq.growthF}
\end{align}
where $1/r+1/r'=1$. 
The growth assumptions in \eqref{eq.growthF} imply that
$ F$
has at most $r$-growth. We indicate by
$\partial
_{X}\phi_{X}$ the subdifferential from $ X$ to $X^{\ast}$ of the restriction
$\phi_{X}$ of $\phi$ to $X$. 
Note that 
$\partial_X \phi_X$ is single-valued. More precisely, we have $\eta = \partial_X \phi_X  (u) = - \Delta u + f(u)$ in $X^\ast$.
Furthermore, we can deduce the existence of a constant $C_4>0$ such that the following conditions hold: 
\begin{align}
&  D(\phi)\subset X\,;\  |u|_X^2 \leq C_4 (\phi(u)+1)  \ \ \forall u \in D(\phi)\text{,} 
\label{coercivity phi}\\
& \|\eta\|_{X^{\ast}}\leq C_4 (\|u\|_X+\|u\|^{r-1}_{L^r(\Omega)} + 1)\ \ \forall   \eta
 = \partial_{X}\phi_{X}(u) 
. \label{growth condition dphi0} 
\end{align} 



 The WIDE  
approach to the Cauchy problem \eqref{prob1}-\eqref{prob5} 
consists in  defining the parameter-dependent family of WIDE  functionals $I_{\rho\varepsilon
}:L^{p}( 0,T;V)\rightarrow
(-\infty,\infty]$ over entire trajectories as
\begin{align}\label{WIDEfunc}
I_{\rho\varepsilon}(u)  &  =\left\{
\begin{array}
[c]{cl}%
\displaystyle{\int_{0}^{T}\mathrm{e}^{-t/\varepsilon}\Big(\frac{\varepsilon^2	\rho}{2} \int_{\Omega}|u''(t)|^2 \d x  +\varepsilon \nu
	\psi(u'(t))+\phi(u(t))\Big)\mathrm{d}t} & \text{if }u\in K(u_{0}, u_1%
)\text{,}\\
\infty & \text{else,}%
\end{array}
\right.
\end{align}
where we let 
\begin{align*}
K(u_{0}, u_1)  
=\{&u\in H^{2}( 0, T;L^2(\Omega))   \cap W^{1,p}( 0, T;  L^p(\Omega) )
     \cap L^{2}( 0, T;H^1_0(\Omega))   :  \\&u(0)=u_{0}, \ \rho u'(0)= \rho u_1 
\}.
\end{align*}
%
The Euler-Lagrange equation for $I_{\rho\epsi}$ under the constraints $u_{\varepsilon}(0)  =u_{0}$ and $ \rho u'_{\varepsilon}(0)  =\rho u_{1}$ reads 
\begin{align}
&\rho \varepsilon^2 u_\varepsilon''''  - 2 \rho \varepsilon
  u_\varepsilon'''  + \rho u_\varepsilon''  -\varepsilon\nu
  \xi_{\varepsilon}' + \nu\xi_{\varepsilon} +\eta
_{\varepsilon}     =0  \text{ \ \ in } X^{\ast}  , \text{
                          a.e. in }%
  (0,T)\text{,}\label{euler1}\\
&\xi_{\varepsilon}     =\mathrm{d}_{V}\psi(u_{\varepsilon}^{\prime
} )  \text{ \ \  in }  V^{\ast}  ,\text{ a.e. in }  (0,T)\text{,}\label{euler2}\\
& \eta_{\varepsilon}     = \partial_{X}\phi_{ X}(u_{\varepsilon} )
                           \text{ \ \ in } X^{\ast}  ,\text{
                          a.e. in } (0,T)\text{,} \label{euler3}\\
&u_{\varepsilon}(0)  =u_{0}  \text{,} \label{euler4}\\
&\rho u'_{\varepsilon}(0)  = \rho u_{1}  \text{,} \label{euler5} \\ 
&\rho u''_{\varepsilon}(T)  =0  \text{,} \\
& \varepsilon  \rho u'''_{\varepsilon}(T)  -\nu \xi_{\varepsilon}(T) =0  \text{.} \label{euler8} 
\end{align}
In particular, the minimizers of the
WIDE functionals $u_\epsi$  solve an elliptic regularization of the target problem
(\ref{prob1})-(\ref{prob5}).

Our  first  main result reads as follows. 

\begin{theorem}[WIDE variational approach]
	\label{main_thm}  Assume \emph{\eqref{def_psi} 
		-(\ref{growth condition hespsi2}), and \eqref{eq.growthF}}. Then, 
	\begin{itemize}
		\item[i)] The \emph{WIDE} functional $I_{\rho\varepsilon}$  admits a unique global
		minimizer  $u_\epsi \in 
		K(u_{0}, u_1)$. 
		
		\item[ii)]For the unique minimizer
		$u_{\varepsilon}$  of $I_{\rho\varepsilon}$,  
	  by letting $\xi_{\varepsilon} = \mathrm{d}_{V}\psi(u_{\varepsilon}')$ and $\eta_{\varepsilon}= \partial_{X}\phi _{X}(u_{\varepsilon})$,  the triple $(  u_{\varepsilon},\xi_{\varepsilon}, \eta_{\varepsilon})$ belongs to
		\begin{align*}
	 &[W^{4, q}(0,T;X^\ast+ L^{q}(\Omega) ) \cap H^{2}(0,T;L^2(\Omega))\cap W^{1,p}(0,T;V)\cap  L^{2}(0,T;X)] \\
	&\times [W^{1,q}(0,T ;X^{\ast}+L^q(\Omega)) \cap L^{p'}(0,T;V^*)]  \times L^{2}(0,T;X^{\ast} )
	\text{,} \;  
		\end{align*}
		and is a strong solution of the Euler-Lagrange problem
		\emph{(\ref{euler1}%
			)-(\ref{euler8})}. Moreover, the global
                      minimizer of $I_{\rho\epsi}$ 
                      and the strong solution of the Euler-Lagrange
		system coincide.
		\item[iii)] For any sequence $\varepsilon_{k}\rightarrow0$, there exists a subsequence (denoted by the same symbol) such that  
		$(u_{\varepsilon_{k}},\xi_{\varepsilon_{k}}, \eta_{\varepsilon_{k}})\rightarrow (u,\xi, \eta ) $ weakly in 
$$
[W^{2,p'}(0,T;X^*) \cap  W^{1,p}(0,T;V)\cap L^2(0,T;X)] \times
	L^{p'}(0,T;V^*)  \times L^{2}(0,T;X^{\ast} ),
$$  
		where  
		$ (u,\xi , \eta) $ 
		is a strong solution to the doubly nonlinear hyperbolic problem \emph{(\ref{prob1})-(\ref{prob5})}. 
	\end{itemize}
	
\end{theorem}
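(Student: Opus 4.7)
The three parts of Theorem~\ref{main_thm} will be addressed in sequence, with the causal limit being the most delicate.

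For part (i), the functional $I_{\rho\epsi}$ is proper on $K(u_0,u_1)$ (any smooth extension of the initial data is admissible), strictly convex thanks to the quadratic inertial term in $u''$, and sequentially weakly lower semicontinuous on $W^{1,p}(0,T;V)$ by convexity and continuity of $G$ and $F$. Coercivity on $K(u_0,u_1)$ follows from \eqref{coercivity psi} and \eqref{coercivity phi}: the terms $\psi(u')$ and $\phi(u)$, together with the initial data, control $u'$ in $L^p(0,T;V)$ and $u$ in $L^2(0,T;X)$, respectively, while the inertial term combined with $\rho u'(0)=\rho u_1$ controls $u''$ in $L^2(0,T;H)$. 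The direct method then yields the unique minimizer $u_\epsi\in K(u_0,u_1)$.

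For part (ii), the Euler-Lagrange system \eqref{euler1}-\eqref{euler8} is formally obtained by setting $\frac{\d}{\d\tau}I_{\rho\epsi}(u_\epsi+\tau\varphi)|_{\tau=0}=0$ along admissible perturbations $\varphi$ with $\varphi(0)=\varphi'(0)=0$ and integrating by parts twice in time. Since neither $\psi$ nor $\phi$ is smooth in general, I would first regularize $\phi$ by its Moreau-Yosida approximation $\phi_\lambda$, so that the regularized functional $I_{\rho\epsi,\lambda}$ admits a classical Euler-Lagrange system, then pass to the limit $\lambda\to 0$ using $\lambda$-uniform bounds provided by minimality together with the standard properties of Yosida approximations. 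This is the subdifferential decomposition for $I_{\rho\epsi}$ highlighted in the introduction. The transversality conditions \eqref{euler8} arise from the freedom of $\varphi$ at $t=T$, and the $W^{4,q}$ regularity stated in (ii) is read off from \eqref{euler1} combined with \eqref{growth condition dpsi} and \eqref{growth condition dphi0}. Monotonicity of $\mathrm{d}_V\psi$ and $\partial_X\phi_X$ applied to the difference of any two strong solutions yields uniqueness and hence the identification with the minimizer.

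For part (iii), $\epsi$-uniform estimates are obtained by testing the minimality of $u_\epsi$ against a fixed reference trajectory $\tilde u$ with the correct initial data; together with the growth conditions, this yields uniform bounds on $u_\epsi$ in $W^{1,p}(0,T;V)\cap L^2(0,T;X)$, on $\xi_\epsi$ in $L^{p'}(0,T;V^*)$, on $\eta_\epsi$ in $L^2(0,T;X^*)$, and, by inspection of \eqref{euler1}, on $\rho u_\epsi''$ in a suitable negative-order space. Passing to a subsequence, the three $\epsi$-prefactored terms $\rho\epsi^2 u_\epsi''''$, $2\rho\epsi u_\epsi'''$, and $\epsi\nu\xi_\epsi'$ vanish in the sense of distributions, so the limit satisfies $\rho u''+\nu\xi+\eta=0$. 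The compact embedding $X\hookrightarrow\hookrightarrow L^r(\Omega)$, valid for $r\le p<4$ in $d\in\{2,3\}$, together with Aubin-Lions yields strong convergence $u_\epsi\to u$ in $C([0,T];L^r(\Omega))$, hence $\eta=\partial_X\phi_X(u)$ by continuity of $f$.

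The main obstacle is the identification $\xi=\mathrm{d}_V\psi(u')$, since $u_\epsi'$ converges only weakly in $L^p(0,T;V)$ and no strong compactness in time is available for the time derivative. My plan is a Minty-type argument: testing \eqref{euler1} with $u_\epsi'$ and integrating by parts using \eqref{euler4}-\eqref{euler5} and the natural boundary conditions \eqref{euler8} — which are engineered precisely to produce favorable cancellations at $t=T$ — then exploiting the chain rule $\int_0^T\langle \eta_\epsi, u_\epsi'\rangle\dt=\phi(u_\epsi(T))-\phi(u_0)$ and the corresponding identities for the inertial terms, would yield a limsup inequality of the form
\[
\limsup_{\epsi\to 0}\int_0^T\langle \xi_\epsi(t), u_\epsi'(t)\rangle\dt \le \int_0^T\langle \xi(t), u'(t)\rangle\dt.
\]
Combined with $\xi_\epsi=\mathrm{d}_V\psi(u_\epsi')$ and monotonicity of $\mathrm{d}_V\psi$, this identifies $\xi=\mathrm{d}_V\psi(u')$. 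The most delicate technical point is controlling the boundary terms at $t=T$ that multiply $\epsi$-rescaled high-order derivatives, and ensuring that the exponential weight $\e^{-t/\epsi}$ — which localizes mass near $t=0$ as $\epsi\to 0$ — does not destroy the dissipation integral.
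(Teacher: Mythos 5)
Your outline for parts (i) and (ii) is broadly in the spirit of the paper, though it overlooks that the paper actually introduces a \emph{two-parameter} Moreau--Yosida regularization: $\bar\psi_\lambda$ for the $p$-growth dissipation potential (regularized at level $\lambda$ over $H=L^2(\Omega)$) and $\bar\phi_\mu$ for the energy (regularized at level $\mu$), since neither has a Lipschitz gradient on $H$. The limits are then taken sequentially, first $\mu\to 0$ then $\lambda\to 0$; regularizing only $\phi$, as you propose, is not enough to make sense of the fourth-order Euler--Lagrange system in $H$, because $\mathrm{d}_V\psi$ is also unbounded on $H$.

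The genuine gap is in your identification of $\xi$. The unweighted Minty inequality
\[
\limsup_{\epsi\to 0}\int_0^T\langle \xi_\epsi(t), u_\epsi'(t)\rangle\dt \le \int_0^T\langle \xi(t), u'(t)\rangle\dt
\]
cannot be proved with the available estimates. After testing \eqref{euler1} with $u_\epsi'-u_1$ over $(0,T)$ and integrating by parts as you suggest, the terms $\tfrac{\rho}{2}\|u_\epsi'(T)-u_1\|^2$ and $\phi(u_\epsi(T))$ appear on the right with negative sign. To close the limsup you would need $\liminf_\epsi\|u_\epsi'(T)-u_1\|^2\ge\|u'(T)-u_1\|^2$ and $\liminf_\epsi\phi(u_\epsi(T))\ge\phi(u(T))$; but the $\epsi$-uniform bounds only give $u_\epsi$ in $L^2(0,T;X)$ and $u_\epsi'$ in $L^p(0,T;V)$, with $\sqrt{\rho\epsi}\sup_t\|u_\epsi'(t)\|\le C$, so there is no control of these endpoint quantities at $t=T$ that is uniform in $\epsi$. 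Your concluding caveat about ``controlling the boundary terms at $t=T$'' names the right obstruction but does not resolve it.

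The paper avoids this by a nested-integration trick: integrate the energy balance \eqref{eq:nestedt} \emph{first} over $(0,t)$ and \emph{then again} over $(0,T)$, which replaces the endpoint terms $\phi(u_\epsi(T))$, $\|u_\epsi'(T)-u_1\|^2$ by time averages $\int_0^T\phi(u_\epsi(t))\,\mathrm{d}t$, $\int_0^T\|u_\epsi'(t)-u_1\|^2\,\mathrm{d}t$, for which weak lower semicontinuity \emph{is} available. The resulting $(T-t)$-weighted (equivalently doubly-integrated) inequality is then combined with the weighted limsup tool of Lemma \ref{lemma:doublelimsup} to identify $\xi=\mathrm{d}_V\psi(u')$. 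Without this weighting, the Minty argument for the dissipation nonlinearity does not go through in the hyperbolic setting.
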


Theorem \ref{main_thm}.i-ii is proved in Section \ref{sec_existence} by
means of  a regularization procedure whereas the causal $\epsi \to 0$ limit
in Theorem \ref{main_thm}.iii is obtained
 in Section \ref{sec_causallimit}.

Furthermore, we study the viscous limit as $\rho \rightarrow 0$ of
the doubly nonlinear hyperbolic problem
\eqref{prob1}-\eqref{prob5}, recovering the doubly nonlinear
parabolic problem studied in \cite{AkSt2011}, namely
\begin{align}
	&\nu \xi(t)+\eta(t)     =0\text{ \ \ \ \ in } V^\ast  \text{ for a.e. }t
	\in (0,T)\text{,}\label{probvisc1}\\
&	\xi(t)     =\mathrm{d}_{V}\psi(u'(t))\text{ \ \ in } V^\ast   \text{ for
		a.e. } t \in (0,T)\text{,}\label{probvisc2}\\
	&\eta(t)     \in\partial\phi(u(t))\text{ }\ \ \ \text{in\ } V^\ast   \text{ for
		a.e. } t \in (0,T) \text{,}\label{probvisc3}\\
	&u(0)     =u_{0}\text{.} \label{probvisc4}
\end{align}
In this regard, our results are illustrated in the following:
\begin{theorem}[Viscous limit] 
	\label{viscous_thm}  Assume \emph{\eqref{def_psi}
		-(\ref{growth condition hespsi2}), and \eqref{eq.growthF}}. Then,
		\begin{itemize} 
		\item[i)]  The WIDE functionals $I_{\rho\varepsilon}$ $\Gamma$- converge as $\rho \rightarrow 0 $ to
		\begin{align*}
			\bar{I}_{\varepsilon}(u)  &  =\left\{
			\begin{array}
				[c]{cl}%
				\displaystyle{\int_{0}^{T}\mathrm{e}^{-t/\varepsilon}\Big(\varepsilon
				\nu	\psi(u'(t))+\phi(u(t))\Big)\mathrm{d}t} & \text{if }u\in \bar{K}(u_{0}
				)\text{,}\\
				\infty & \text{else,}%
			\end{array}
			\right.
		\end{align*}
		where
		\begin{align*}
			\bar{K}(u_{0})  
			&  =\{W^{1,p}( 0, T;  V ) \cap L^{2}( 0, T;X)   :  u(0)=u_{0}
			\} 
		\end{align*}
	with respect to the strong topology of $L^{p}(0,T;V)  $.
		\item[ii)] Let $(u_{\rho}, \xi_{\rho}, \eta_{\rho})$
                  be a strong solution of the doubly nonlinear hyperbolic
		problem \emph{(\ref{prob1})-(\ref{prob5})} in
		$$
		[W^{1,p}(0,T;V)\cap L^2(0,T;X)] \times
		L^{p'}(0,T;V^*)  \times L^{2}(0,T;X^{\ast} ) .
		$$  
		For any sequence $\rho_{k}\rightarrow0$, there exists a subsequence (denoted by the same symbol) such that  
	$(u_{\rho_k}, \xi_{\rho_k}, \eta_{\rho_k}) \rightarrow (\bar{u},\bar{\xi}, \bar{\eta})$ weakly$^\ast$ in 
		$$
		[W^{1,p}(0,T;V)\cap L^\infty(0,T;X)] \times
		L^{p'}(0,T;V^*)  \times L^{2}(0,T;X^{\ast} ),
		$$  
		where  
		$ (\bar{u},\bar{\xi}, \bar{\eta})$ 	is a strong
                solution to the doubly nonlinear parabolic problem \eqref{probvisc1}-\eqref{probvisc4}.
			\item[iii)] Let $(u_{\varepsilon \rho }, \xi_{\varepsilon\rho}, \eta_{\varepsilon \rho })$  be a strong solution of the Euler-Lagrange problem \emph{(\ref{euler1}%
				)-(\ref{euler8})} belonging to the regularity class of \emph{Theorem \ref{main_thm}.ii}.
		For any pair of sequences $(\varepsilon_{k},
                \rho_{k})\rightarrow (0,0)$, there exists a not
                relabeled subsequence such that  
		$(u_{ \varepsilon _k \rho_k}, \xi_{\varepsilon _k \rho_k},  \eta_{\varepsilon _k \rho_k}) \rightarrow (\tilde{u},\tilde{\xi} , \tilde{\eta})$ weakly in 
		$$
		[W^{1,p}(0,T;V)\cap L^2(0,T;X)] \times
		L^{p'}(0,T;V^*)  \times L^{2}(0,T;X^{\ast} ),
		$$  
		where  
		$ (\tilde{u},\tilde{\xi}, \tilde{\eta})$ is a strong solution to the doubly nonlinear parabolic problem \eqref{probvisc1}-\eqref{probvisc4}.  
	\end{itemize}
      \end{theorem}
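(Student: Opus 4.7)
The plan is to address the three parts in turn. For the $\Gamma$-convergence in part (i), the $\liminf$ inequality is immediate from the nonnegativity of the inertial term $\tfrac{\rho\varepsilon^2}{2}\|u''\|^2$ and the weak lower semicontinuity of $u\mapsto\int_0^T e^{-t/\varepsilon}\psi(u')\dt$ and $u\mapsto\int_0^T e^{-t/\varepsilon}\phi(u)\dt$ on $L^p(0,T;V)$, via convexity of $\psi$ and $\phi$. For the recovery sequence, the subtlety is that a generic $u\in\bar K(u_0)$ need not admit a trace $u'(0)$, let alone satisfy $u'(0)=u_1$. I would first mollify $u$ in time to obtain $u^\sigma \in C^\infty([0,T];V)\cap L^2(0,T;X)$ with $u^\sigma(0)=u_0$, and then set $u^\sigma_\rho(t) = u^\sigma(t) + b_h(t)$, where $b_h$ is a boundary-layer correction supported in $[0,h]$ with $b_h(0)=0$ and $b_h'(0) = u_1 - (u^\sigma)'(0)$. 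A direct computation gives $\rho\int_0^T e^{-t/\varepsilon}\|b_h''\|^2\dt \sim \rho(\|u_1\|+\|(u^\sigma)'(0)\|)^2/h$ together with $\|b_h\|_{L^p(V)} \lesssim h^{1+1/p}$, so a diagonal argument in $(\sigma, h, \rho)\to 0$ with $h\gg\rho$ produces the required recovery sequence.

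For part (ii), testing the hyperbolic equation \eqref{prob1} against $u_\rho'$ yields the energy identity
\begin{equation*}
  \tfrac{\rho}{2}\|u_\rho'(t)\|^2 + \phi(u_\rho(t)) + \nu\int_0^t \langle\xi_\rho,u_\rho'\rangle\ds = \tfrac{\rho}{2}\|u_1\|^2 + \phi(u_0).
\end{equation*}
Combined with \eqref{coercivity dpsi} and \eqref{coercivity phi}, this yields $\rho$-uniform bounds on $u_\rho$ in $L^\infty(0,T;X)\cap W^{1,p}(0,T;V)$ together with the crucial decay $\sqrt\rho\, u_\rho' \in L^\infty(0,T;H)$, while the growth conditions \eqref{growth condition dpsi} and \eqref{growth condition dphi0} control $\xi_\rho$ in $L^{p'}(0,T;V^*)$ and $\eta_\rho$ in $L^2(0,T;X^*)$. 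Passing to a weakly$^*$ convergent subsequence, $\rho u_\rho'\to 0$ strongly in $L^\infty(0,T;H)$, so integration by parts in time makes the inertial contribution vanish against smooth test functions, leaving $\nu\bar\xi + \bar\eta = 0$ in the limit. The initial condition $\bar u(0)=u_0$ is preserved by Aubin--Lions strong convergence in $C([0,T];H)$. The nonlinearities are identified in Minty fashion: $\bar\eta=-\Delta\bar u+f(\bar u)$ follows from the compactness $X\hookrightarrow V$, the continuity of $f$, and the growth \eqref{eq.growthF}; $\bar\xi=\mathrm{d}_V\psi(\bar u')$ follows from the limit energy identity together with the lower semicontinuity of $\int_0^T\psi(u')\dt$, yielding $\limsup_{\rho\to 0}\int_0^T\langle\xi_\rho,u_\rho'\rangle\dt\leq\int_0^T\langle\bar\xi,\bar u'\rangle\dt$ and then maximal monotonicity of $\mathrm{d}_V\psi$.

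For part (iii), the strategy combines the causal-limit estimates of Theorem \ref{main_thm}.iii with the viscous-limit estimates above. Testing the weighted Euler--Lagrange system \eqref{euler1}--\eqref{euler8} against $u_{\varepsilon\rho}'$ with appropriate integration by parts in time yields bounds jointly uniform in $\varepsilon$ and $\rho$, in particular $\sqrt\rho\,u_{\varepsilon\rho}'$ bounded in $L^\infty(0,T;H)$. Along a jointly weakly convergent subsequence, the remainder terms $\rho\varepsilon^2 u''''$, $\rho\varepsilon u'''$, $\rho u''$, and $\varepsilon\nu\xi'$ all vanish, leaving $\nu\tilde\xi+\tilde\eta=0$, with the nonlinearities identified exactly as in (ii). The main obstacle throughout is the identification of $\xi$: unlike $\eta$, the map $u'\mapsto\xi$ cannot be closed by compactness (no compactness is available on $u'$ in the relevant topology), so one must combine the energy identity with maximal monotonicity, which requires careful control of the inertial contributions as $\rho\to 0$.
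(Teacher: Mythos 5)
Your proposal follows essentially the same route as the paper in all three parts, with two minor variations worth noting. For the recovery sequence in (i), you decouple the mollification width $\sigma$ from the boundary-layer width $h$ and conclude by a triple diagonal, whereas the paper couples them into a single parameter $\tilde\rho=\rho^{1/s}$ ($s>3$) and uses the explicit non-compactly-supported corrector $\zeta^{\tilde\rho}(t)=t\,\mathrm{e}^{-t/\tilde\rho}$; both give $\rho\|u_{\rho}''\|^2_{L^2}\to 0$ and the required convergence by dominated convergence. For (ii)--(iii), your a priori estimates are the paper's; the one structural difference is that you close the graph of $\mathrm{d}_V\psi$ with the plain Minty criterion $\limsup_\rho\int_0^T\langle\xi_\rho,u_\rho'\rangle\leq\int_0^T\langle\bar\xi,\bar u'\rangle$, whereas the paper deliberately uses the $(T-t)$-weighted version of Lemma~\ref{lemma:doublelimsup}, obtained by integrating the energy estimate once more in time. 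The weighted form is what naturally survives from the causal-limit inequality \eqref{ineqcausal} and avoids having to control $\phi(u_\rho(T))$ pointwise at the final time; your unweighted form works too, but then you must (a) show $u_\rho(T)\to\bar u(T)$ weakly in $X$ (which does follow from the uniform bound on $\phi(u_\rho(t))$ and the strong $C([0,T];V)$ convergence), and (b) invoke a chain-rule inequality $\int_0^T\langle\bar\eta,\bar u'\rangle\geq\phi(\bar u(T))-\phi(u_0)$ for $\bar\eta\in\partial\phi(\bar u)$. Two small slips: the ingredient you attribute to ``lower semicontinuity of $\int_0^T\psi(u')$'' is in fact weak lower semicontinuity of $\phi$ at the endpoint; and in (ii) the energy identity cannot be obtained by literally testing \eqref{prob1} with $u_\rho'$ because $u_\rho''$ lives only in $L^2(X^*)+L^{p'}(V^*)$ --- the paper justifies it by arguing on backward difference quotients, a step you should not skip.
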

      
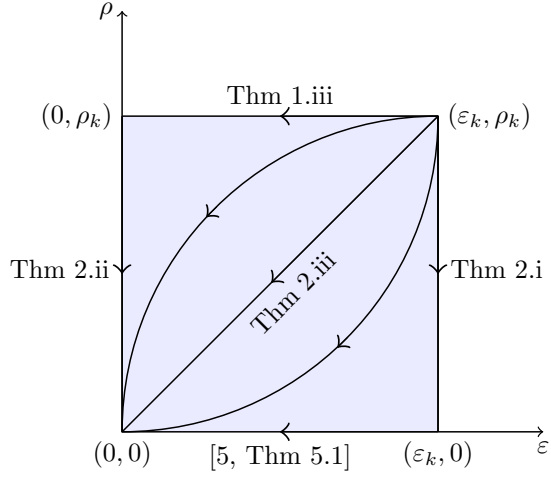
\begin{figure}
	\begin{tikzpicture}[scale=1.4]
		\fill[fill=blue!8!] (0, 0) -- (0, 3) -- (3, 3) -- (3, 0) -- cycle;
		\draw[scale=4,domain=0:2,smooth, ->, line width=0.2mm] (0,0)--(0,1);;
		\node[scale=1,left] at (0,4)   {$\rho$};;
		\node[scale=1,below] at (0,0)   {$(0,0)$};;
		\node[scale=1,below] at (3,0)   {$(\varepsilon_k,0)$};;
		\node[scale=1,right] at (3,3)   {$(\varepsilon_k, \rho_k)$};;
		\node[scale=1,left] at (0,3)   {$(0, \rho_k)$};;
		\draw[scale=4,domain=0:2,smooth, ->, line width=0.2mm] (0,0)--(1,0);;
		\node[scale=1,below] at (4,0)   {$\varepsilon$};;
		\draw[scale=3,domain=0:2,smooth, -, line width=0.2mm] (1,1)--(0,1);;
		\draw[scale=3,domain=0:2,smooth, -, line width=0.2mm] (1,1)--(1,0);;
		\draw[scale=3,domain=0:2,smooth, -, line width=0.2mm] (1,1)--(0,0);;
		\node[scale=1,above] at (1.5,3.03)   {Thm \ref{main_thm}.iii};;
		\node[scale=1,right] at (3.03,1.55)   {Thm \ref{viscous_thm}.i};;
		\node[scale=1,left] at (-0.03,1.55)   {Thm \ref{viscous_thm}.ii};;
		\node[scale=1,right, rotate=45] at (1.2,0.9)   {Thm \ref{viscous_thm}.iii};;
		\node[scale=1,below] at (1.5,-0.03)   {\cite[Thm 5.1]{AkSt2011}};;
		
		\draw[line width=0.2mm] (3,0) -- (3,3) arc[start angle=90, end angle=180,radius=3cm,  line width=0.23mm] -- (0,0);
			\draw[line width=0.2mm] (0,3) -- (0,0) arc[start angle=270, end angle=360,radius=3cm] -- (3,3);
		
		\draw[
		decoration={markings,mark=at position 1 with {\arrow[scale=2]{>}}},
		postaction={decorate},
		shorten >=0.1pt
		]
		(3,0) -- (1.5,0);
		
		\draw[
		decoration={markings,mark=at position 1 with {\arrow[scale=2]{>}}},
		postaction={decorate},
		shorten >=0.1pt
		]
		(0,3) -- (0,1.5);
		
		\draw[
		decoration={markings,mark=at position 1 with {\arrow[scale=2]{>}}},
		postaction={decorate},
		shorten >=0.1pt
		]
		(3,3) -- (3,1.5);
		
		\draw[
		decoration={markings,mark=at position 1 with {\arrow[scale=2]{>}}},
		postaction={decorate},
		shorten >=0.1pt
		]
		(3,3) -- (1.5,3);
		
		\draw[
		decoration={markings,mark=at position 1 with {\arrow[scale=2]{>}}},
		postaction={decorate},
		shorten >=0.1pt
		]
		(3,3) -- (1.415,1.415);
		
		\draw[
		decoration={markings,mark=at position 1 with {\arrow[scale=2]{>}}},
		postaction={decorate},
		shorten >=0.1pt
		]
		(0.9,2.14) -- (0.8,2.04);
		
		\draw[
		decoration={markings,mark=at position 1 with {\arrow[scale=2]{>}}},
		postaction={decorate},
		shorten >=0.1pt
		]
		(2.145,0.9) -- (2.045,0.8);
		

	\end{tikzpicture}
	\label{figure:diag}
	\caption{  Theorems \ref{main_thm}-\ref{viscous_thm}:
          illustration of the various limits with respect to $\epsi$
          and $\rho$.}
      \end{figure}
      
Theorem \ref{viscous_thm} is proved in Section
\ref{sec_viscouslimit}. More precisely, Theorem \ref{viscous_thm}.i is
obtained in Subsection \ref{subsec:viscgammconv}, whereas the
proof of Theorem \ref{viscous_thm}.ii is given in Subsection
\ref{subsec:viscconv}. Eventually, Theorem \ref{viscous_thm}.iii
can be proved by combining the arguments of Sections
\ref{sec_causallimit}-\ref{sec_viscouslimit}.

\section{Preliminary materials}\label{sec_preliminary}
 In this section, we present a tool  
for further use.

\begin{lemma}[Weighted limsup tool] \label{lemma:doublelimsup}
Let $A: D(A) \subset V\rightarrow 2^{V^*}$ be a maximal monotone operator. Let $v_\varepsilon \rightarrow v$ weakly in $L^p(0,T; V)$ as $\varepsilon \rightarrow 0$. Let $w_\epsilon(t) \in Av_\varepsilon(t)$ for almost every $t \in (0,T)$ be such that $w_\epsilon \rightarrow w$ weakly in $L^{p'}(0,T; V^*)$ as $\varepsilon \rightarrow 0$. Moreover, assume that the following inequality holds
\begin{align}
	\limsup_{\varepsilon \rightarrow 0} \int_0^T (T-t) \langle w_\varepsilon, v_\varepsilon\rangle \dt \leq  \int_0^T (T-t)\langle w, v\rangle \dt . \label{eq:hplimsup}
\end{align}
Then, $w(t) \in Av(t)$ for almost every $t \in (0,T)$.
\end{lemma}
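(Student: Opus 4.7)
The plan is to reduce Lemma \ref{lemma:doublelimsup} to the classical Browder--Minty convergence result for maximal monotone operators, applied to a suitable time-weighted lifting of $A$ to a Bochner setting. The key idea is to absorb the factor $(T-t)$ into a reference measure $\mu := (T-t)\,\d t$ on $(0,T)$. Since $(T-t)>0$ on $(0,T)$ and $(T-t)$ is bounded, $\mu$ and Lebesgue measure share the same null sets on $(0,T)$, so any property holding $\mu$-a.e.\ also holds a.e.\ in the Lebesgue sense. I would then introduce the weighted Bochner spaces $L^p_\mu := L^p((0,T),\mu;V)$ and $L^{p'}_\mu := L^{p'}((0,T),\mu;V^*)$, naturally paired by
\begin{equation*}
\langle\!\langle w,v \rangle\!\rangle_\mu := \int_0^T (T-t) \langle w(t), v(t) \rangle \d t.
\end{equation*}

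First, I would lift $A$ to the pointwise realization $\mathcal{A}: L^p_\mu \to 2^{L^{p'}_\mu}$ by setting $w \in \mathcal{A} v$ if and only if $w(t) \in Av(t)$ for a.e.\ $t \in (0,T)$. The maximal monotonicity of such a pointwise lift on Bochner spaces over a finite reference measure is classical (see, e.g., Barbu's monograph on maximal monotone operators). Since $(T-t)$ is bounded and strictly positive on compact subsets of $[0,T)$, the weak convergence $v_\varepsilon \to v$ in $L^p(0,T;V)$ transfers to weak convergence in $L^p_\mu$, and analogously for $w_\varepsilon \to w$ in $L^{p'}_\mu$. Moreover $w_\varepsilon \in \mathcal{A} v_\varepsilon$ by assumption, and hypothesis \eqref{eq:hplimsup} rewrites exactly as $\limsup_{\varepsilon \to 0} \langle\!\langle w_\varepsilon, v_\varepsilon \rangle\!\rangle_\mu \leq \langle\!\langle w, v \rangle\!\rangle_\mu$.

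Next, I would execute the classical Minty trick in this weighted pair. For an arbitrary $(\widetilde v, \widetilde w) \in \mathrm{graph}(\mathcal{A})$, pointwise monotonicity of $A$ combined with the positivity of $(T-t)$ yields
\begin{equation*}
\int_0^T (T-t) \langle w_\varepsilon - \widetilde w, v_\varepsilon - \widetilde v \rangle \d t \geq 0.
\end{equation*}
Expanding the bilinear expression and taking $\limsup_{\varepsilon \to 0}$, the diagonal term $\langle\!\langle w_\varepsilon, v_\varepsilon \rangle\!\rangle_\mu$ is bounded above by \eqref{eq:hplimsup}, while the two cross terms converge thanks to the weak convergences in $L^p_\mu$ and $L^{p'}_\mu$ (noting that $\widetilde v \in L^p_\mu$ and $\widetilde w \in L^{p'}_\mu$). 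One obtains $\langle\!\langle w - \widetilde w, v - \widetilde v \rangle\!\rangle_\mu \geq 0$ for every $(\widetilde v, \widetilde w) \in \mathrm{graph}(\mathcal{A})$, and maximality of $\mathcal{A}$ then forces $(v,w) \in \mathrm{graph}(\mathcal{A})$, i.e., $w(t) \in Av(t)$ for $\mu$-a.e.\ $t$, which by the equivalence of null sets is the desired Lebesgue-a.e.\ conclusion.

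The main obstacle is the maximal monotonicity of the Bochner lift $\mathcal{A}$, which underpins the whole argument. This is standardly established by verifying the surjectivity criterion $R(\lambda J + \mathcal{A}) = L^{p'}_\mu$ for $\lambda > 0$ and $J$ a duality map on $L^p_\mu$, through a pointwise-a.e.\ resolvent construction combined with a measurable-selection argument; since $V = L^p(\Omega)$ is reflexive and $A$ is maximal monotone, the result is available in the literature and can be invoked directly. A minor technical point is the transfer of the weak convergences between the unweighted and weighted Bochner spaces, which is immediate from the boundedness and strict positivity of the weight on any $[0,T-\delta]$.
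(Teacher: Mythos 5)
Your proof is correct in spirit but takes a genuinely different route from the paper, and one step is invoked more lightly than it deserves. The paper's proof is a localization argument: one fixes $t_0\in(0,T)$, chooses competitors $(\hat v,\hat w)$ equal to an arbitrary graph element $(\tilde v,\tilde w)$ of $A$ on a small interval $B_\delta(t_0)$ and to $(v,z)$ (with $z(t)\in Av(t)$ a measurable selection) elsewhere, passes to the limit in $\varepsilon$ using monotonicity, \eqref{eq:hplimsup} and the weak convergences, and finally shrinks $\delta\to 0$ via the Lebesgue Differentiation Theorem to get the pointwise inequality $(T-t_0)\langle w(t_0)-\tilde w, v(t_0)-\tilde v\rangle\geq 0$; dividing by $T-t_0>0$ and using maximality of $A$ in $V\times V^*$ then identifies $w(t_0)\in Av(t_0)$. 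You instead lift $A$ to a pointwise realization $\mathcal A$ on the weighted Bochner spaces $L^p_\mu$, $L^{p'}_\mu$ and run Minty's trick globally, appealing to maximal monotonicity of $\mathcal A$ there. The transfer of weak convergence from $L^p(0,T;V)$ to $L^p_\mu$ is fine (since $(T-t)^p\leq T^{p-1}(T-t)$, every $\tilde v\in L^p_\mu$ yields $(T-t)\tilde v\in L^p(0,T;V)$, and similarly for the dual side), and the Minty expansion and passage to the limit are exactly the paper's computation rewritten in weighted-pairing notation. Where you are a bit cavalier is the maximal monotonicity of the realization $\mathcal A$ on the \emph{weighted} duality pair $(L^p_\mu,L^{p'}_\mu)$: this is true and morally standard, but it is not a verbatim citation from the usual references (which treat Lebesgue measure), and proving it by surjectivity of $\lambda J+\mathcal A$ requires a measurable-selection argument and a normalization on $A$ (some $v_0\in D(A)$ with a $V^*$-valued section) so that pointwise resolvents glue to an element of $L^p_\mu$. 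In effect both proofs need the same measurable-selection machinery: the paper uses it to produce the section $z$, you bury it inside the maximality of $\mathcal A$. The paper's route buys self-containedness and avoids any weighted functional-analytic setup, at the cost of the Lebesgue-point bookkeeping and the implicit countable-dense-graph step needed to manage the $(\tilde v,\tilde w)$-dependent null sets; your route is cleaner to state and more modular, at the cost of relying on a heavier blackbox that should be spelled out or precisely referenced rather than ``invoked directly.''
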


\begin{remark}\label{rem:limsup}
	Note that  
		\begin{align*}
		\int_0^T \int_0^t f(s)\ds \dt = 	\int_0^T (T-t)
                  f(t) \dt \,, \quad \text{ for any } f \in L^1(0,T). 
	\end{align*}
As a consequence, assumption \eqref{eq:hplimsup} of Lemma \ref{lemma:doublelimsup} is equivalent to
	\begin{align}
		\limsup_{\varepsilon \rightarrow 0} \int_0^T \int_0^t \langle w_\varepsilon, v_\varepsilon\rangle_V \ds \dt \leq  \int_0^T \int_0^t \langle w, v\rangle_V \ds \dt \, . \label{eq:hplimsup0}
	\end{align}
\end{remark}

\begin{proof}[Proof of Lemma \ref{lemma:doublelimsup}]
	Fix $t_0 \in (0,T)$. Let $\delta>0$ be such that $t_0>\delta$
        and $T-t_0 >\delta$, and let $B_\delta(t_0):= (t_0 - \delta, t_0+\delta)$.
    For any $\tilde{v} \in V$ and $\tilde{w} \in A\tilde{v}$, we define
    $$\hat{v}(t):=
    \left\{
      \begin{array}{ll}
        v(t)  \quad& \text{ for } t \notin B_\delta(t_0), \\
        \tilde{v} & \text{ for } t \in B_\delta(t_0),
      \end{array}\right.$$
                and
                $$\hat{w}(t):=
                \left\{
                  \begin{array}{ll}
                    z(t) \quad& \text{ for } t \notin B_\delta(t_0), \\
                    \tilde{w} &\text{ for } t \in B_\delta(t_0), 
\end{array}	\right.$$ where $z(t) \in  Av(t)$. 

Using the monotonicity of $A$, \eqref{eq:hplimsup} and the weak
convergence properties of $w_\varepsilon$ and of $v_\varepsilon$,  we
deduce that
\begin{align*}
0 \leq \limsup_{\varepsilon \rightarrow 0} \int_0^T  (T-t)\langle w_\varepsilon - \hat{w}, v_\varepsilon - \hat{v}\rangle  \dt \leq  \int_0^T (T-t) \langle w - \hat{w}, v  - \hat{v}\rangle  \dt .
\end{align*}
From the definition of $\hat{v}$ we deduce
\begin{align*}
	\int_0^T &(T-t) \langle w - \hat{w}, v  - \hat{v}\rangle \dt =  \int_{ B_\delta(t_0)} (T-t) \langle w - \tilde{w}, v  - \tilde{v}\rangle  \dt 	.
\end{align*}
Since the left-hand side is nonnegative, it follows that
\begin{align*}
 0 \leq \frac{1}{2 \delta}\int_{ B_\delta(t_0)}
  (T-t)\langle w - \tilde{w}, v  - \tilde{v}\rangle \d t ,
\end{align*}
where the integral in the right-hand side converges to $(T-t_0) \langle w(t_0)
- \tilde{w}, v(t_0)- \tilde{v} \rangle$ as $\delta \rightarrow 0$
for almost all $t_0 \in (0,T)$, due to the  Lebesgue  Differentiation Theorem.
As a consequence, we obtain \[
 \langle w(t_0) - \tilde{w}, v(t_0)- \tilde{v}\rangle \geq 0 
\]
  for almost all $t_0 \in (0,T)$. Finally, recalling that $\tilde{w}\in A \tilde{v}$, by using the maximal monotonicity of $A$ in $V \times V^\ast$, we can conclude that $w(t) \in Av(t)$ for almost all $t \in (0,T)$. 
\end{proof}

\section{Existence of solutions to the Euler-Lagrange
  problem} \label{sec_existence}

This section focuses on the solution of the Euler-Lagrange
  problem \eqref{euler1}-\eqref{euler8} and on the minimization of the WIDE
  functionals. At first, we introduce some approximation of the
  functionals in Subsection \ref{sec:approx} depending on two approximation parameters $\mu, \lambda >0$, and investigate their
  subdifferential in Subsection \ref{sec:subdifferential}. These
  approximated functionals admit minimizers. The corresponding
  Euler-Lagrange problem is in Subsection \ref{sec:EL}. We derive a
  priori estimates in Subsection \ref{subsec_a priori estimates} which
  allow to pass to the limit in the approximation first as $\mu \rightarrow 0$ (Subsection
  \ref{sec:limitmu}) and then as $\lambda\rightarrow 0$ (Subsection
  \ref{sec:limitlambda}), and to find a solution to  the Euler-Lagrange
  problem \eqref{euler1}-\eqref{euler8}. Eventually, such solutions
  are checked to correspond to minimizers of the WIDE functionals in
  Subsection \ref{sec_minimum}. 
  
\subsection{Approximating functional $I_{\rho\varepsilon}^{\lambda\mu}$} \label{sec:approx}
Recall that $H=L^2(\Omega)$, define the spaces
\begin{align} \label{spaces}
\mathcal{H} = L^{2}( 0,T;H), \quad \mathcal{V} = L^{p}( 0,T;V)\,, \quad \mathcal{W} = H^{2}( 0,T;H) 
\,,
\end{align}
and the approximating functional
$I^{\lambda\mu}_{\rho\varepsilon} : \mathcal{H}  \rightarrow(-\infty,\infty]$
as 
\begin{align}
&I_{\rho\varepsilon }^{\lambda\mu} (u)  =\left\{
\begin{array}
[c]{cl}%
\displaystyle{\int_{0}^{T}\mathrm{e}^{-t/\varepsilon}\Big(\frac{\varepsilon^2	\rho}{2} \|u''(t)\|^2  +\varepsilon    \bar \psi_\lambda (u^\prime(t)) + \bphi (u(t)) \Big)\mathrm{d}t}  & \text{if }u\in K_\lambda(u_{0}, u_1%
)\text{,}\\
\infty & \text{else,}%
\end{array}
\right.\label{WIDEfuncelambda}
\\[3mm] \notag
&K_\lambda(u_{0}, u_1)   = \{u\in    \mathcal{W}  : u(0)=u_{0}, \rho u'(0)= \rho u_1 \} .
\end{align}
Here, $\bar \psi_\lambda$ denotes the Moreau-Yosida
regularization at the level $\lambda>0$ of the extension $\bar \psi: H \rightarrow [0,\infty]$, given by
\[\bar \psi(v) = \begin{cases}
	\psi(v) \quad \text{ if } v \in V, \\
	\infty \quad \text{ else},
\end{cases}\]
namely, 
\begin{align}
	\bar \psi_{\lambda}(v):=\inf_{w\in {H}}\left( \frac{1}{2\lambda} \|v-w\|^{2} 
	+ \bar \psi(w) \right)  
	=\frac{1}{2\lambda} \|v- \bar J_{\lambda}v\|^{2} + \psi(\bar J_\lambda v) \text{,} \label{psi_lambda} 
\end{align}
where $\bar J_{\lambda}$ is the resolvent of  $\partial_{{H}}\bar \psi_{ \lambda}$ at level $\lambda$, namely the solution operator $\bar J_\lambda : v \mapsto \bar J_\lambda v$ to 
\begin{align}
	( v-  \bar J_\lambda v  )\in \lambda  \partial_{H} \psi
	(\bar J_\lambda v)   
	\text{ \ \ in } H  ,
	\label{eq:barresolvent} 
\end{align}
for any $v \in {H}$.
In particular, one has that $\bar \psi_\lambda \in C^{1,1}(H), \,\partial_{H} \bar \psi_\lambda \in C^{0,1}(H;H)$, and $\operatorname{Hes}(\bar \psi_{ \lambda}\lambda ) \in L^\infty(H; \mathcal{L}(H;H))$ (Hessian). 
Moreover, $\bphi$ denotes the Moreau-Yosida
regularization at level $\mu>0$ of the extension $\bar \phi : H \rightarrow [0,\infty]$ of $\phi$,  namely,
\begin{align}
\bphi(u):=\inf_{v\in {H}}\left( \frac{1}{2\mu}\|u-v\|^{2} 
+ \phi(v) \right)  
=\frac{1}{2\mu}\|u-J_{\mu}u\|^{2} + \phi(J_{ \mu} u) \text{,} \label{phi_lambda} 
\end{align} 
where $J_{\mu}$ is the  resolvent of  $\partial_{{H}}\bphi$ at level $\mu$, namely, the solution operator $J_{\mu} : u \mapsto J_{\mu} u$ to 
\begin{align}
 ( u-  J_{\mu} u  )  \in { \mu}  \partial_{H} \phi
  (J_{\mu} u)   
  \text{ \ \ in } H ,
\label{eq:resolvent} 
\end{align}
for any $u \in {H}$. 
 Recall that $\partial_H \bar \psi_\lambda (v)= (v - \bar J_\lambda
v)/\lambda$  and $\partial_H \bphi (v)= (v - J_{\mu} v)/{\mu}$. As 
$
D( I^{\lambda\mu}_{\rho\varepsilon})= K_\lambda(u_0, u_1),
$
$I^{\lambda\mu}_{\rho\varepsilon}$ can be decomposed as
\begin{align}
I_{\rho \varepsilon}^{\lambda\mu}=\bar{I}^\lambda_{\rho \varepsilon}+\Phi_{\varepsilon\mu}%
\text{,} \label{Idecomposition}
\end{align}
where the functionals $\bar{I}^\lambda_{\rho\varepsilon}, \Phi_{\varepsilon
 \mu}: \mathcal{H} \rightarrow [0,\infty]$ are defined by
\begin{align}
\label{Ibar}
& \bar{I}^\lambda_{\rho \varepsilon}(u)=\left\{
\begin{array}
[c]{cl}%
\displaystyle{\int_{0}^{ T }\mathrm{e}^{-t/\varepsilon}\Big(\frac{\varepsilon^2	\rho}{2} \|u''(t)\|^2  +\varepsilon \nu  \bar \psi_\lambda(u^\prime(t)) \Big)\mathrm{d}t}   & \text{if } u\in {K}_\lambda(u_{0}, u_1%
)\text{,}\\
\infty & \text{else,}%
\end{array}
\right. 
\end{align}
and 
\begin{align}
\label{Phi}
\Phi_{\varepsilon\mu}(u)& =
\displaystyle{\int_{0}^{T}\mathrm{e}^{-t/\varepsilon} \bphi (u) \, \mathrm{d}t}   
\end{align}
with domains 
\begin{align*}
D(\bar{I}^\lambda_{\rho \varepsilon})    ={K}_\lambda(u_{0}, u_1)  ,\quad 
D(\Phi_{\varepsilon \mu})   =  \mathcal{H}. 
\end{align*}

The functional $I_{\rho\varepsilon}^{\lambda\mu}$ is proper, 
lower semicontinuous, and convex in $\mathcal H$. 
Moreover, thanks to the Poincar\'e inequality, it is coercive on $\mathcal{H}$.  The
Direct Method ensures that $I_{\rho\varepsilon}^{\lambda\mu}$ admits
a minimizer $u_{\varepsilon\lambda \mu} \in  K_\lambda(u_{0}, u_1)$. 

 \subsection{Representation of
   subdifferentials} \label{sec:subdifferential}
 In order to derive the Euler-Lagrange equation for
$I^{\lambda\mu}_{\rho\varepsilon}$, we prepare here some representation results. Recalling \eqref{spaces},
we denote by $\partial_\mathcal{H}$ and $\partial_\mathcal{W}$ the subdifferentials in
the sense of convex analysis from $\mathcal{H}$ to
$\mathcal{H}^{\ast}$ and from $\mathcal{W}$ to $\mathcal{W}^{\ast}$, respectively. 

First, note that we can further decompose the functional $ \bar{I}_{\rho\varepsilon}^\lambda: \mathcal{H}   \rightarrow [0, \infty]$ as
\[
 \bar{I}_{\rho\varepsilon}^\lambda =  {Y}_{\rho\varepsilon}^\lambda + {Y}_{(0)} \,,
\]
where 
\begin{align}
\nonumber 
& {Y}_{\rho\varepsilon}^\lambda(u)=\left\{
\begin{array}
[c]{cl}%
\displaystyle{\int_{0}^{T}\mathrm{e}^{-t/\varepsilon} \left( \frac{\varepsilon^2	\rho}{2} \|u''(t)\|^2 + \varepsilon \nu \bar\psi_\lambda(u^\prime(t)) \right) \, \mathrm{d}t}   & \text{if } u\in \mathcal{W} \text{,}\\
\infty & \text{else,}%
\end{array}
\right. 
\\[3mm] \nonumber 
&{Y}_{(0)}(u) = \left\{
\begin{array}
[c]{cl}%
0  & \text{if } u \in \mathcal{W} \,, \, u(0)=u_{0}\,, \text{ and } \rho u'(0)= \rho u_1
\text{,}\\
\infty & \text{else.}%
\end{array}
\right. 
\end{align}
The functional ${Y}^\lambda_{\rho\varepsilon} $ is Gateaux differentiable in $\mathcal{W}$
, in particular we have
\begin{align*}
	\langle \mathrm{d}_\mathcal{W} {Y}^\lambda_{\rho\varepsilon} (u),
  e \rangle_\mathcal{W} &=  \int_0^T \left( \mathrm{e}^{-t/\varepsilon}
 \varepsilon^2 \rho  \langle u''(t) , e''(t)
                          \rangle  +  \mathrm{e}^{-t/\varepsilon}
                          \varepsilon \langle \partial_H \bar \psi_\lambda(u'(t)) ,
                          e'(t)  \rangle \right)\, \mathrm{d} t ,
                          \; \; \; \forall e \in \mathcal{W} ,
\end{align*}
where 	$\langle \cdot, \cdot \rangle_\mathcal{W} $ stands for the duality pairing between $\mathcal{W}^\ast$ and $\mathcal{W}$.
On the other hand, we have
\begin{align*}
\langle f, e\rangle_{\mathcal{W}}=0 \quad \text { for all }[u, f] \in \partial_\mathcal{W} {Y}_{(0)}\ \text { and } e \in \mathcal{W} \text { with } e(0)=e'(0)=0.
\end{align*}
Since $D({Y}^\lambda_{\rho\varepsilon} ) =  \mathcal{W}$, we deduce that
\[
 \partialw \bar{I}_{\rho\varepsilon}^\lambda = \mathrm{d}_\mathcal{W} {Y}^\lambda_{\rho\varepsilon}  + \partialw {Y}_{(0)}\,,
\]
with domain
$
D(\partialw \bar{I  }^\lambda_{\rho\varepsilon})  
= K_\lambda(u_0,u_1) \,.
$


Let $u \in D(\partial_{\mathcal H} \bar{I}^\lambda_{\rho \varepsilon})$. As we have $D(\bar{I}^\lambda_{\rho\varepsilon})  \subset \mathcal{W}  \subset
\mathcal{H} $, we conclude that $ \partial_\mathcal{H}\bar{I}^\lambda_{\rho \varepsilon}
\subset \partial_{\mathcal{W}} \bar{I}^\lambda_{\rho\varepsilon} $. 
Letting $f \in  \partial_{\mathcal{H}}  \bar{I}^\lambda_{\rho\varepsilon}(u) $, since $f  \in  \mathrm{d}_\mathcal{W}  {Y}^\lambda_{\rho\varepsilon} (u)  + \partialw {Y}_{(0)}(u) $, we obtain
\begin{align}\label{eq:raprsubdiff}
	\int_{0}^{T} \e^{-t / \varepsilon} \varepsilon^2 \rho \left\langle u^{\prime \prime}(t), e^{\prime \prime}(t)\right\rangle + \e^{-t / \varepsilon} \varepsilon \nu \left\langle \partial \bar\psi_\lambda\left(u^{\prime}(t)\right), e^{\prime}(t)\right\rangle \, \mathrm{d} t &=\int_{0}^{T}\langle f(t), e(t)\rangle\, \mathrm{d} t, \\
\forall e \in  \mathcal{W} \text{ such that } e(0)&=e'(0)=0. \notag
\end{align}
By setting
\[
g(t):= - \int_t^T f (s) \ds , 
\]
we have $g \in W^{1,2}(0,T;H)$ and, for all $\varphi \in C^\infty_c(0,T)$,
\begin{align*}
	\int_0^T  \varphi(t)  g(t) \dt 
	&= - \int_0^T \Big(\int_0^s \varphi(t) \, \dt \Big) f(s) \, \ds \\
	&= -\int_{0}^{T} \left(\e^{-t / \varepsilon} \varepsilon^2 \rho u^{\prime \prime}(t) \varphi^{\prime}(t) +  \e^{-t / \varepsilon} \varepsilon \nu \partial_H \bar \psi_\lambda\left(u^{\prime}(t)\right)\varphi(t) \right)\, \mathrm{d} t ,
\end{align*}
where in the last equality we used \eqref{eq:raprsubdiff}. 
As a next step, we observe that $\partial_H \bar \psi_\lambda (u') \in W^{1,2}(0,T;H)$, due to $(\partial_H \bar \psi_\lambda (u'(t)) )' = \operatorname{Hes}(\bar \psi_\lambda (u'(t)) )u''(t)$ and the fact that $\operatorname{Hes}(\bar \psi_\lambda (u'(t)) )$ is bounded.
Hence, $\left(g +  \e^{- t / \varepsilon} \varepsilon \nu \partial_H \bar \psi_\lambda  (u^{\prime}) \right)\in  W^{1,2}(0,T;H) $. 
As a consequence, we obtain
\begin{align*}
	\int_0^T  \left(g(t) + \e^{-t / \varepsilon} \varepsilon \nu \partial_H \bar \psi_\lambda \left(u^{\prime}(t)\right) \right) \varphi(t)  \dt
	&= - \int_{0}^{T} \e^{-t / \varepsilon} \varepsilon^2 \rho u^{\prime \prime}(t) \varphi^{\prime}(t)\, \mathrm{d} t,
\end{align*}
whence $\e^{-t / \varepsilon} \varepsilon^2 \rho u^{\prime \prime} \in W^{1,2}(0,T;H)$ with $\frac{\mathrm{d}}{\mathrm{d} t } \left(\e^{-t / \varepsilon} \varepsilon^2 \rho u^{\prime \prime} \right) = g + \e^{-t / \varepsilon} \varepsilon \nu \partial_H \bar \psi_\lambda \left(u^{\prime}\right)$. 
In particular,  $(\e^{-t / \varepsilon} \varepsilon^2 \rho u^{\prime \prime \prime} $ $ - \e^{-t / \varepsilon} \varepsilon\rho u^{ \prime \prime} )  \in \mathcal{H}$ and $\e^{-t / \varepsilon} \varepsilon^2 \rho u^{\prime \prime \prime}  \in \mathcal{H}$ as $u^{\prime \prime} \in \mathcal{H}$. 
It follows from \eqref{eq:raprsubdiff} again that 
\begin{align*}
\lefteqn{
- \int_{0}^{T} \e^{-t / \varepsilon} \varepsilon^2 \rho u^{\prime \prime \prime}(t) \varphi^{\prime }(t) \, \d t
}\\
&= \int^T_0 f(t) \varphi(t) \, \d t
- \int^T_0 \e^{-t/\varepsilon} \varepsilon \partial_H \bar\psi_\lambda\left(u^{\prime}(t)\right)  \varphi'(t)  \, \d t- \int^T_0 \e^{-t/\varepsilon} \varepsilon \rho u^{\prime \prime}(t) \varphi^{\prime}(t) \, \d t,\\
&\qquad \qquad \qquad \qquad \qquad \qquad \qquad \mbox{ for all } \ \varphi \in C^\infty_0(0,T), 
\end{align*}
which yields $\e^{-t/\varepsilon} \varepsilon^2 \rho u^{\prime \prime \prime} - \e^{-t/\varepsilon} \varepsilon \partial_H \bar\psi_\lambda(u^{\prime}) - \e^{-t/\varepsilon} \varepsilon \rho u^{\prime \prime} \in W^{1,2}(0,T;H)$, and hence, due to the facts obtained so far, we infer that $u \in W^{4,2}(0,T; H)$.

Therefore, integrating by parts in \eqref{eq:raprsubdiff},
 we have
$$
f(t) =\frac{\mathrm{d}^2}{\mathrm{d} t^2}\left(\e^{-t / \varepsilon} \varepsilon^2 \rho u''(t)\right)-\frac{\mathrm{d}}{\mathrm{d} t}\left(\e^{-t / \varepsilon} \varepsilon \nu \partial_H  \bar\psi_\lambda\left(u^{\prime}(t)\right)\right) \in  H
$$
for almost all $t \in (0,T)$ and also $\varepsilon^2\rho u''(T)= \varepsilon^2\rho u'''(T) - \varepsilon \partial_H  \bar\psi_\lambda(u^{\prime}(T))=0$, due to the arbitrariness of $e(T ), e'(T) \in H$, as well as $\rho u''(T)=\varepsilon \rho u'''(T) - \partial_H  \bar\psi_\lambda(u^{\prime}(T))=0$.

Finally, we can conclude that
\begin{align} \label{inclusion domain subdiff}
D(\partial_{\mathcal{H}} \bar{I}^\lambda_{\rho \varepsilon})  \subset   \{  u \in \mathcal{W} :  \, 
 u \in W^{4,2}(0,T; H),     u(0)=u_0, \rho u'(0)= \rho u_1, &
                                                         \notag\\  \rho u''(T)=0,
                                                          \varepsilon \rho u'''(T) -  \partial_H 
                                                          \bar \psi_\lambda(u^{\prime}(T))=0 &\} ,
\end{align}
and
\begin{align}
	\partial_{\mathcal{H}} \bar{I}^\lambda_{\rho \varepsilon}(u)(t) = \frac{\mathrm{d}^2}{\mathrm{d}t^2}\left(\e^{-t / \varepsilon} \varepsilon^2 \rho u''(t)\right) -\frac{\mathrm{d}}{\mathrm{d} t}\left(\e^{-t / \varepsilon} \varepsilon \nu \partial_H \bar \psi_\lambda\left(u^{\prime}(t)\right)\right) \quad \text{for a.e. } t \in (0,T)	.
\end{align}
We observe that the inclusion $\subset $ in \eqref{inclusion domain subdiff} can be replaced by an equality, as the reverse inclusion $\supset$ is straightforward.


Note that  $\partial_{\mathcal{H}}\Phi_{\varepsilon \mu}: \mathcal{H}  \rightarrow\mathbb{R}$ is
demicontinuous (i.e., strong-weak continuous) and single-valued.  
As a consequence, we have that $D(\Phi_{\varepsilon \mu})= \mathcal{H} $.  As  $\partial_{\mathcal{H}} \bar{I}_{\rho \varepsilon}  + \partial_{\mathcal{H}}\Phi_{\varepsilon \mu}$ is
maximal monotone in $\mathcal{H\times H}$,  we obtain
\begin{equation}
\partial_{\mathcal{H}} I_{\varepsilon\rho}^{\lambda\mu} =\partial
_{\mathcal{H}}\bar{I}^\lambda_{\rho \varepsilon}  +\partial_{\mathcal{H}%
}\Phi_{\varepsilon \mu} \text{.} \label{subdiff}%
\end{equation} 
Note that the minimizer of $ I^{\lambda\mu}_{\varepsilon\rho}$ is unique. It
hence coincides with the strong solution of \eqref{euler:lambda} below. 

\subsection{Euler-Lagrange equation for \(I_{\rho\varepsilon}^{\lambda \mu}\)} \label{sec:EL}

 Thanks to the decomposition \eqref{subdiff}, the minimizer  
 $u_{\varepsilon \lambda \mu}$ of \(I_{\rho\varepsilon}^{\lambda \mu}\) 
 fulfills 
 \[
0\in\partial
 _{\mathcal{H}}\bar{I}^\lambda_{\rho\varepsilon}(u_{\varepsilon \lambda \mu})  +\partial_{\mathcal{H}%
 }\Phi_{\varepsilon \mu} (u_{\varepsilon \lambda \mu}) \text{.} 
 \]
In particular, the following holds 
 \begin{align}
&\rho \varepsilon^2 u''''_{\varepsilon \lambda \mu} - 2 \rho
                 \varepsilon u'''_{\varepsilon \lambda \mu}  + \rho
                 u''_{\varepsilon \lambda \mu}   + \eta_{\varepsilon
                 \lambda \mu}  -\varepsilon  \nu \xi_{\varepsilon
                 \lambda \mu}' +  \nu \xi_{\varepsilon \lambda \mu}  =0
                 \; \text{  in } H, \,\text{ a.e. in } 
                 (0,T) \text{,}\label{euler:lambda}
\\
&  \eta_{\varepsilon \lambda \mu}   =  \partial_H \bphi
     (u_{\varepsilon \lambda \mu}  )    = - \Delta J_{\mu}
     u_{\varepsilon \lambda \mu}  + f(J_{\mu} u_{\varepsilon \lambda \mu}
     )   \text{ \ in } H,\text{ a.e. in } (0,T)\text{,} \label{euler:etalambda}\\
 &  \xi_{\varepsilon \lambda \mu} =\partial_H \bar \psi_\lambda ( u_{\varepsilon\lambda \mu}' )
   \text{ in } H, \text{ a.e. in }   (0,T), \label{euler:xilambda}\\
 &u_{\varepsilon\lambda \mu}(0)  =u_{0} \text{,} \label{euler:0} \\
& \rho u'_{\varepsilon\lambda \mu}(0)  = \rho u_{1} \text{,} \label{euler:1} \\
&\rho u''_{\varepsilon\lambda \mu}(T)  =0\text{,} \label{euler:3}  \\
& \varepsilon  \rho u'''_{\varepsilon\lambda\mu}(T)  - \nu \xi_{\varepsilon\lambda\mu}(T)  
                                                                    =0 \label{euler:4} \text{.}
 \end{align}

 \subsection{A priori estimates} \label{subsec_a priori estimates} 
 We now derive a priori estimates for $u_{\varepsilon \lambda \mu}$ which
 will eventually allow us to pass to the limit as $\mu \rightarrow0$ in Subsection \ref{sec:limitmu} and then $\lambda\rightarrow0$ in Subsection \ref{sec:limitlambda}, 
 then as $\varepsilon\rightarrow0$  in Section
 \ref{sec_causallimit}, and finally as $\rho \rightarrow 0$
 in Section \ref{sec_viscouslimit}.   In what follows, the symbol $C$ will
 denote a generic positive constant independent of $\mu$, $\lambda$, $\varepsilon$, and $\rho$, possibly varying from line to line. 
 Occasionally, specific dependencies of the constant $C$ will be indicated.
 Furthermore, for the sake of brevity, we write $(u, \xi, \eta)$ instead of $(u_{\varepsilon \lambda \mu},\xi_{\varepsilon \lambda \mu}, \eta_{\varepsilon \lambda \mu}) $ in the following.

%
Testing \eqref{euler:lambda} with $u' - u_1$, integrating over $\Omega \times (0,t)$ for an arbitrary $t \in (0,T]$ and then integrating by parts, we obtain
 \begin{align} \label{eq:nestedt}
 &  \frac{\rho \varepsilon^2}{2} \|u''(0)\|^2 -
   \frac{\rho \varepsilon^2}{2} \|u''(t)\|^2 
 + \rho \varepsilon^2 \langle u'''(t), u'(t) - u_1 \rangle  
   \nonumber\\
   &- 2 \rho \varepsilon \langle u''(t), u'(t) - u_1 \rangle
 + 2 \rho  \varepsilon \int_0^t \|u''(s)\|^2\ds \notag \\
 &+\frac{\rho }{2} \|u'(t)-u_1\|^2
 - \varepsilon \nu \langle \xi(t), u'(t)- u_1 \rangle 
 + \int_{0}^{t} \nu \langle \xi(s), u'(s)- u_1 \rangle \ds
  + \varepsilon \nu \bar \psi_\lambda(u'(t))+\bphi (u(t))\notag \\
  &= \varepsilon\nu \bar \psi_\lambda (u_1) + \bphi (u_0) + \int_{0}^t \langle \eta(s), u_1 \rangle \ds .
 \end{align}
Observe that for $t=T$, the estimate \eqref{eq:nestedt} reduces to 
\begin{align} \label{eq:nestedT}
&\int_{0}^{T} \nu \langle \xi(t), u'(t)- u_1 \rangle \dt + \frac{\rho \varepsilon^2}{2} \|u''(0)\|^2 + 2 \rho \varepsilon \int_0^T \|u''(t)\|^2 \dt +\frac{\rho }{2} \|u'(T)-u_1\|^2\notag \\ &+ \varepsilon \nu \bar \psi_\lambda(u'(T))+\bphi(u(T))= \varepsilon \nu \bar \psi_\lambda(u_1) + \bphi(u_0) + \int_{0}^T \langle \eta(t), u_1 \rangle \dt .
\end{align}
Integrating \eqref{eq:nestedt} over $(0,T)$, then integrating by parts and summing it to \eqref{eq:nestedT}, we obtain
 \begin{align} \label{eq:nested}
&  \frac{\rho \varepsilon^2(T+1)}{2} \|u''(0)\|^2
+  \frac{\rho (4- 3\varepsilon) \varepsilon}2  \int_{0}^T
                                   \|u''(t)\|^2 \dt  + 2 \rho  \varepsilon \int_{0}^T \int_0^t \|u''(s)\|^2 \ds \dt \notag \\
&+ \frac{\rho ( 1-2\varepsilon) }2 {\|u'(T)-u_1\|^2}
+ \int_{0}^T \frac{\rho }2 {\|u'(t)-u_1\|^2} \dt 
+ (1- \varepsilon ) \nu \int_{0}^T \langle \xi(t), u'(t)- u_1 \rangle \dt  \notag \\
&+ \int_{0}^T \int_{0}^{t} \nu \langle \xi(s), u'(s)- u_1 \rangle \ds \dt 
 + \varepsilon \nu \bar \psi_\lambda(u'(T))+\bphi(u(T)) \notag
\\&+ \varepsilon\nu \int_{0}^T \bar \psi_\lambda(u'(t)) \dt + \int_{0}^T\bphi(u(t)) \dt  \notag \\
&= \varepsilon (T+1 ) \nu\bar \psi_\lambda(u_1) + (T+1) \bphi(u_0) +  \int_{0}^T \langle \eta(t), u_1 \rangle \dt + \int_{0}^T \int_{0}^t \langle \eta(s), u_1 \rangle \ds \dt  .
\end{align}


Using the fact that $\psi, \phi \geq 0$ and $\langle \xi(t), u'(t)- u_1 \rangle  \geq \bar \psi_\lambda(u'(t)) - \bar \psi_\lambda (u_1)$, we get from \eqref{eq:nested}
\begin{align*} 
&    \frac{\rho (4 -  3 \varepsilon) \varepsilon}2  \int_{0}^T \|u''(t)\|^2\dt 
+ \frac{\rho(1 -2\varepsilon) }2 {\|u'(T)-u_1\|^2} 
\\&+  (1 - \varepsilon )\nu \int_{0}^T  \bar \psi_\lambda(u'(t)) \dt  
 + \int_{0}^T\bphi(u(t)) \dt \notag \\
&\leq \Big(\varepsilon + T + \frac{T^2}2 \Big) \nu \psi(u_1) + (T+1)  \bar\phi_\mu (u_0)+  \int_{0}^T \langle \eta(t), u_1 \rangle \dt + \int_{0}^T \int_{0}^t \langle \eta(s), u_1 \rangle \ds \dt  .
\end{align*}
The last two terms in the previous inequality can be treated as follows
\begin{align*}
	&\int_{0}^T \langle \eta(t), u_1 \rangle \dt + \int_{0}^T \int_{0}^t \langle \eta(s), u_1 \rangle \ds \dt
	 = 	\int_{0}^T (1+T-t)\langle \eta(t), u_1 \rangle \dt	\\
	 &\leq (1+T)	\int_{0}^T \| \eta(t)\|_{X^*} |u_1|_X\dt \\
	 &   \leq   C(T) \int_{0}^T  (\| J_\mu u\|_X+\| J_\mu u\|^{r-1}_{L^r(\Omega)} + 1) \|u_1\|_{X} \dt  \\
   &  \leq   \delta  C(T)  \int_{0}^T (\| J_\mu  u\|_X^2+\| J_\mu u\|^{r}_{L^r(\Omega)} ) \dt 
 + C(T,\delta)   (\|u_1\|^2_{X}+ \|u_1\|^r_{X}  + \|u_1\|_{X}  ) \\
  &  \leq   \delta  C(T)  \int_{0}^T\phi( J_\mu u)\dt 
 + C(T,\delta, u_1)  \\
  &  \leq   \delta  C(T)  \int_{0}^T\bphi(u) \dt 
 + C(T,\delta, u_1)  
\end{align*}
 where we used \eqref{growth condition dphi0}, the Young inequality
 for a  sufficiently  small $\delta>0$ to be fixed below, as
 well as \eqref{eq.growthF} and definition \eqref{phi_lambda}. 
As a consequence, we obtain the following estimate
\begin{align}\label{eq:nestedfinal}
	&    \frac{\rho (4 - 3 \varepsilon) \varepsilon}2  \int_{0}^T \|u''(t)\|^2\dt 
	+ \frac{\rho(1 -2\varepsilon) }2 {\|u'(T)-u_1\|^2}\notag \\
	& +  (1- \varepsilon)\nu \int_{0}^T \bar \psi_\lambda(u'(t)) \dt  
	+ c\int_{0}^T\bphi(u(t)) \dt \notag \\
	&\leq \Big(\varepsilon + T + \frac{T^2}2 \Big) \nu \bar \psi_\lambda(u_1) + (T+1) \bphi(u_0) + C(T,\delta, u_1)   
\end{align}
for some strictly positive constant $c=c(T,\delta)<1$.
For a sufficently small $\varepsilon>0$, from \eqref{eq:nestedfinal} we can deduce
\begin{align}
&\varepsilon \rho \| u'' \|^2_{L^2(0,T; H)} \leq C, \label{bnd_uprimeprime}\\
& \rho {\|u'(T)-u_1\|^2} \leq C, \label{bnd_uprimefixedT}
 \end{align}
  \begin{align}
&\int_0^T \bar \psi_{ \lambda}(u'(t)) \, \mathrm{d}t \leq C ,\label{bnd_psilambda}
\\
  &\int_0^T \bphi(u(t)) \, \mathrm{d}t \leq C .\label{bnd_phi}
  \end{align}
 Hence, using \eqref{coercivity psi}-\eqref{growth condition dpsi} and \eqref{eq.growthF}-\eqref{coercivity phi}  as well as \eqref{psi_lambda} and \eqref{phi_lambda}, we get
  \begin{align}
 	& \| \bar J_\lambda u' \|^p_{L^p(0, T ; V)}  \leq C  , \label{bnd_uprimeT} 
 \\	& \| \d_V \psi(\bar J_\lambda u') \|^{p'}_{L^{p'}(0,T ; V^*)}  \leq C  , \label{bnd_dpsiT}
 \\
 &\| J_\mu u\|_{L^2(0, T ; X)}  \leq C \label{bnd_uX}, \\
  &\| J_\mu u\|_{L^r(0, T ; L^r(\Omega))}  \leq C .
 \end{align}
Recalling that $\partial_H \bar \psi _\lambda (u')= \partial_H \bar \psi (\bar J_\lambda u') \subset \d_V \psi ( \bar J_\lambda u')$, we get 
\begin{align}
	\| \partial_H \bar \psi_\lambda ( u') \|^{p'}_{L^{p'}(0,T ; V^*)}  \leq C  . \label{bnd_partialHpsiT}
\end{align}
From 
\eqref{bnd_uprimeprime}, along with $u'(0)=u_1$ and $u(0) = u_0$, it follows that
 \begin{align} 
&  \sqrt{\rho \varepsilon} \sup_{t \in (0,T)} \| u^\prime(t)\|\leq C , \label{bnd_uprimeinf}\\
&  \sqrt{\rho \varepsilon} \sup_{t \in (0,T)} \|u(t)\|
  \leq C.\label{bnd_supu}
 \end{align}
Furthermore, observe that $ \left(\partial_H \bar \psi_{ \lambda} (u') \right)^\prime= \operatorname{Hes} \psi (\bar J_\lambda u') \left( \bar J_\lambda u'\right )^\prime$ and 
 \[
 \left\|\left( \bar J_\lambda u'\right )^\prime(t)\right\| = \lim_{h \rightarrow 0} \left\| \frac{\bar J_\lambda u'(t+h)- \bar J_\lambda u'(t)}{h}\right\| \leq 
 \lim_{h \rightarrow 0} \left\| \frac{u'(t+h)- \bar  u'(t)}{h}\right\| = \|u''(t)\|
 \]
 for a.a. $t \in (0,T)$.
 Hence, we have
 \begin{align}
  \sqrt{\rho \varepsilon} \|\bar J_\lambda u' \|_{W^{1, 2}(0,T; H )} &\leq C, \label{bnd_Jprime}\\
 \sqrt{\rho \varepsilon} \| \partial_H \bar \psi_\lambda(u'(t)) \|_{W^{1,2}(0,T; H)} &\leq C \lambda^{-1}, \label{bnd_psi_lam} \\
 \sqrt{\rho \varepsilon} \| \partial_H \bar \psi_\lambda(u'(t)) \|_{W^{1,q}(0,T; L^q(\Omega))} &\leq C 
\quad \text{with } q=  \frac{2p}{3p-4}  \in (1,p'], \label{bnd_psi}
 \end{align} as $\partial_H \bar \psi _\lambda (u')= \partial_H \bar \psi (\bar J_\lambda u')$ and due to\eqref{growth condition hespsi}-\eqref{growth condition hespsi2}. 

Arguing as above one can check that
\[
\sup_{t \in (0,T)} \| (J_\mu u)'(t)\|^2 \leq \sup_{t \in (0,T)} \| u'(t)\|^2  \leq 2 \| u_1\|^2 + 2T \int_0^T \| u''(t)\|^2 \dt \leq \frac{C}{\varepsilon \rho}. 
\]
The bound \eqref{bnd_uX} hence ensures that 
\begin{align}\label{new_one}
	\| J_\mu u\|_{W^{1,\infty}(0,T;H)\cap L^2(0,T;X)} \leq \frac{C}{\sqrt{\varepsilon \rho}}. 
\end{align}
Note that $W^{1,\infty}(0,T; H) \cap L^2(0,T;X) \hookrightarrow L^8(0,T;L^4(\Omega)) $.
 Since $\eta \in   \partial_H  \bar\phi(J_\mu u) \subset \partial_{X}\phi_{ X}(J_\mu u)$,
 assumptions \eqref{growth condition dphi0} and bounds \eqref{bnd_uX} and \eqref{new_one} ensures that
\begin{align} 
	\rho \varepsilon \| \eta \|^{2}_{L^{2}(0,T;   X^*)} 
	&\leq \rho \varepsilon C  \Big( \| J_\mu u \|^2_{L^2(0,T;X)} + \int_{0}^T \| J_\mu u \|^{2(r-1)}_{L^r(\Omega)}\dt +1 \Big)  \notag\\
	&\leq \rho \varepsilon C \big( 1 + \| J_\mu u \|^8_{L^8(0,T;L^4(\Omega))}\big)
	 \leq C .\label{bnd_eta}  
\end{align}

  
We close this subsection by deriving additional a priori estimates. A comparison in equation \eqref{euler:lambda} yields
 \begin{align*} 
 \|\rho \varepsilon^2 u''''- 2 \rho \varepsilon u'''\|_{L^{2}(0,T;X^{\ast})} &\leq C (\varepsilon,\rho,T) \lambda^{-1}, \\
 \|\rho \varepsilon^2 u''''- 2 \rho \varepsilon u''' - \varepsilon \xi'  \|_{L^{2}(0,T;X^{\ast}) +L^{p'}(0,T;V^{\ast}) } &\leq  C (\varepsilon,\rho,T) , \\
\|\rho \varepsilon^2 u''''- 2 \rho \varepsilon u'''+ \rho u''  \|_{L^{2}(0,T;X^{\ast}) + L^q(0,T;L^q(\Omega)) +L^{p'}(0,T;V^{\ast}) } &\leq
C (\varepsilon,\rho,T) \text{.} 
\end{align*}

Moreover, we deduce additional regularity for the first two terms in \eqref{euler:lambda}.
Observe that, setting $v:=u'''$, we can rewrite \eqref{euler:lambda} as
\begin{align} \label{ODE}
\rho \varepsilon^2 v'(t) - 2 \rho \varepsilon v(t)  + w(t)
= 0  \quad \text{ in } V^\ast \,,
\end{align}
where $w(t):= \rho u'' (t)   + \eta (t) -\varepsilon  \nu
\xi '(t)+ \nu \xi (t) $ for a.e. $t \in (0,T)$. Note that
we have a uniform bound (only in $\lambda$, not in $\varepsilon$) in
the $L^{q}(0,T;L^q(\Omega)+X^{\ast}) $ norm for $w$, thanks to
\eqref{bnd_uprimeprime}, \eqref{bnd_psi}, and \eqref{bnd_eta}. Hence,
solving \eqref{ODE} for $v$ we deduce 
\begin{align*}
\| v\|_{ L^{q}(0,T;L^q(\Omega)+X^{\ast}) } \leq 
C( \varepsilon,\rho,T)\,, \quad  \|v\|_{L^2(0,T;X^{\ast})} \leq 
C(\varepsilon, \rho,T) \lambda^{-1}\,,
\end{align*}
then, by comparison,
\begin{align*}
\| v'\|_{L^{q}(0,T;L^q(\Omega)+X^{\ast}) } \leq  
C( \varepsilon,\rho,T)\,, \quad  \|v'\|_{L^2(0,T;X^{\ast})} \leq  
C( \varepsilon,\rho,T) \lambda^{-1}\,,
\end{align*}
which eventually ensures that
\begin{align*}
\| u '''\|_{ L^{q}(0,T;L^q(\Omega)+X^{\ast}) }  + 	\| u ''''\|_{ L^{q}(0,T;L^q(\Omega)+X^{\ast})} &\leq C(\varepsilon, \rho,T)\,,\\
 \| u '''\|_{L^2(0,T;X^{\ast})} + \|u''''\|_{L^2(0,T;X^{\ast})} &\leq C(\varepsilon, \rho,T) \lambda^{-1}.
\end{align*}

\subsection{Passage to the limit as $\mu \rightarrow 0$}\label{sec:limitmu}

Let $u_{\varepsilon \lambda \mu}$ be a minimizer of $I^{\lambda\mu}_{\rho\varepsilon}$, $\eta_{\varepsilon \lambda \mu} = \partial_H \bphi (u_{\varepsilon \lambda \mu} )$, and $\xi_{\varepsilon \lambda \mu}=\partial_H \bar \psi_{\lambda} ( u_{\varepsilon\lambda\mu}')$. We have proved that $(u_{\varepsilon \lambda \mu},\eta_{\varepsilon \lambda \mu} ,  \xi_{\varepsilon \lambda \mu})$ solves \eqref{euler:lambda}--\eqref{euler:4}. From the uniform estimates of Subsection \ref{subsec_a priori estimates}, we deduce the following convergences as $\mu \rightarrow0_+$ (up to not relabeled subsequences)
\begin{align}
u_{\varepsilon\lambda\mu} & \rightarrow u_{\varepsilon\lambda} \text{ weakly in }W^{4,2}(0,T;X^{\ast}) \cap H^{2}(0,T;H)
,\label{convulambda0}\\
J_\mu u_{\varepsilon \lambda \mu} & \rightarrow v_{\varepsilon \lambda} \text{ weakly in }L^{2}(0,T;X) \text{,} \label{convulambdax0}\\
\xi_{\varepsilon \lambda \mu}  &  \rightarrow\xi_{\varepsilon \lambda} \text{ weakly in } W^{1,2}(0,T;H) \text{,}\label{convxilambda0}\\
\eta_{\varepsilon\lambda\mu}  &  \rightarrow\eta_{\varepsilon\lambda} \text{ weakly in }L^{2}\left(0,T;X^{\ast}\right).\label{convetalambda0}
\end{align}
Convergences \eqref{convulambda0}-\eqref{convetalambda0} are sufficient in order to pass to the limit in equation \eqref{euler:lambda} and obtain 
\begin{equation}
\rho \varepsilon^2 u''''_{\varepsilon\lambda} -2 \rho \varepsilon u'''_{\varepsilon\lambda} + \rho u''_{\varepsilon\lambda} - \varepsilon \xi_{\varepsilon\lambda}' + \xi_{\varepsilon\lambda}+\eta_{\varepsilon\lambda}=0 \ \ \text{ in } X^{\ast}, \text{ a.e. in }  (0,T) \text{.}
\label{eqlimitlambda0}
\end{equation}
Since $X$ is compactly embedded in $H$, we deduce from \eqref{bnd_phi} and \eqref{new_one} that
\begin{align}
&J_\mu u_{\varepsilon \lambda \mu} \rightarrow u_{\varepsilon \lambda} \text{ strongly in } C([0,T];H),\\
&u_{\varepsilon \lambda \mu} \rightarrow u_{\varepsilon\lambda} \text{ strongly in } L^2(0,T;H) \,,\label{convstrongu0}  \\
&u_{\varepsilon \lambda \mu} (t)  \rightarrow u_{\varepsilon\lambda}(t) \text{ strongly in } H \text{ for a.a. } t \in (0,T)\,. \label{convstrongut0}
\end{align}
Moreover, since $H \subset X^{\ast}$ compactly, we have
\begin{align}
\xi_{\varepsilon \lambda \mu}  &\rightarrow \xi_{\varepsilon \lambda} \text{ strongly in }C([0,T];  X^{\ast}), \label{convxilambdax0} \\
u''_{\varepsilon \lambda \mu}  &\rightarrow u''_{\varepsilon \lambda} \text{ strongly in }C([0,T];  X^{\ast}), \label{convu''lambdax0}
\end{align}
hence 
\begin{equation}\label{euler:3l}
u''_{\varepsilon \lambda}(T)=0.
\end{equation}
Furthermore, for all $t \in [0,T]$, we can take a subsequence $\mu^t_n \rightarrow 0$ (possibly depending on $t$) such that
\begin{align}
u'_{\varepsilon \lambda \mu^t_n}(t) &  \rightarrow u'_{\varepsilon \lambda }(t)  \text{ weakly in }  H. \label{convweaku'0}
\end{align}

Eventually, the initial data \eqref{euler:0}-\eqref{euler:1} and the final datum \eqref{euler:3} can be recovered in the limit $\mu \rightarrow 0_+$ thanks to convergence \eqref{convulambda0}. The final datum \eqref{euler:4} can be recovered in the limit arguing as it follows. From \eqref{euler:4} and \eqref{convxilambdax0} one can deduce that
\begin{align*}
\varepsilon \rho u'''_{\varepsilon \lambda \mu} (T)= \xi_{\varepsilon \lambda \mu}(T) &\rightarrow \xi_{\varepsilon\lambda}(T) \text{ strongly in } X^{\ast}.
\end{align*}
On the other hand, it follows from \eqref{convulambda0} that 
\begin{align}\label{aaa}
\varepsilon \rho u'''_{\varepsilon \lambda \mu} (T) \rightarrow \varepsilon \rho u'''_{\varepsilon \lambda} (T)\text{ weakly in } X^{\ast}
\end{align}
(see Appendix \S \ref{apdx}), hence 
\begin{equation}\label{euler:4l}
 \varepsilon \rho u'''_{\varepsilon \lambda} (T) = \xi_{\varepsilon \lambda}(T)  \ \mbox{ in } X^{\ast}.
\end{equation}

\subsubsection{Identification of the nonlinearities}\label{Ss:ident}

\paragraph{\it Identification of $\eta_{\varepsilon}$}

Define the operator $A:X \to X^*$ and recall $ B : X \to X^\ast$ as
\begin{align}
\langle Au,v \rangle_X :=\int_\Omega \nabla u {\cdot} \nabla v \,\d x,
\quad \langle  B(u),v \rangle_X := \int_\Omega f(u) v \,\d x \label{defAB}
\end{align}
so that $\langle Au,u \rangle = \| u \|_X^2$ and $B(u)=f(u)$ almost everywhere. At first, convergence \eqref{convulambdax0} entails that $AJ_\mu u_{\epsi\lambda\mu} \to Au_{\epsi\lambda}$ weakly in $L^{2}(0,T;X^*) $. On the other hand, the strong convergence \eqref{convstrongut0} and the continuity of $f$ ensure  that $f(u_{\epsi\lambda\mu}) \to f(u_{\epsi\lambda})$ almost everywhere. As $f$ is bounded by \eqref{eq.growthF}, we readily get that $f(u_{\epsi\lambda\mu}) \to f(u_{\epsi\lambda})$ strongly in $L^s(0,T;L^s(\Omega))$ for any $s<r'$. As $r \leq p< 2^*$, this in particular implies that $\eta_{\varepsilon\lambda}  = - \Delta u_{\varepsilon\lambda} + f(u_{\varepsilon\lambda})$ in $X^*$ almost everywhere in $(0,T)$. 

\paragraph{\it Identification of $\xi_{\varepsilon\lambda}$}
Recall that $u_{\lambda \varepsilon \mu} \in W^{4,2}(0,T;H)$ is such that $\rho u_{\lambda \varepsilon \mu}''(T)=0$, $u_{\lambda \varepsilon \mu}(0)=u_0$, $\rho u_{\lambda \varepsilon\mu}'(0)=\rho u_1$, and $\xi_{\varepsilon \lambda \mu} \in W^{1,2}(0,T;H)$. Moreover, note that we have $\rho \varepsilon^2 u_{\lambda \varepsilon \mu}''''-2\rho \varepsilon u_{\lambda \varepsilon \mu}'''-\varepsilon \xi'_{\lambda \varepsilon \mu} \in L^{2}(0,T;H)$. Then, we have 
\begin{align*}
\int_{0}^{T}\langle \varepsilon \nu \xi_{\varepsilon \lambda \mu}(t), u'_{\varepsilon \lambda \mu}(t)\rangle \, \mathrm{d}t
&= \langle \varepsilon \nu\xi_{\varepsilon \lambda \mu}(T), u_{\varepsilon \lambda \mu}(T) - u_0\rangle - \int_{0}^{T}\langle \varepsilon \nu \xi'_{\varepsilon \lambda \mu}(t), u_{\varepsilon \lambda \mu}(t) - u_0\rangle \, \mathrm{d}t,
\end{align*}
where we integrated by parts. As a next step, we exploit \eqref{euler:lambda}, obtaining
\begin{align*}
\int_{0}^{T}\langle \varepsilon \nu \xi_{\varepsilon \lambda \mu}(t), u'_{\varepsilon \lambda \mu}(t)\rangle \, \mathrm{d}t 
&= \langle \varepsilon \nu\xi_{\varepsilon \lambda \mu}(T), u_{\varepsilon \lambda \mu}(T) - u_0\rangle  \\
&\quad - \int_{0}^{T}\langle \rho \varepsilon^2 \nu u''''_{\varepsilon \lambda \mu}(t) -2 \rho \varepsilon u'''_{\varepsilon \lambda \mu}(t) , u_{\varepsilon \lambda \mu}(t) - u_0\rangle \, \mathrm{d}t \\
& \quad - \int_{0}^{T}\langle \rho  u''_{\varepsilon \lambda \mu}(t)+  \xi_{\varepsilon \lambda \mu}(t) + \eta_{\varepsilon \lambda \mu}(t) , u_{\varepsilon \lambda \mu}(t) - u_0\rangle \, \mathrm{d}t.
\end{align*}
We integrate by parts the term
\begin{align*}
&\langle \varepsilon \nu\xi_{\varepsilon \lambda \mu}(T), u_{\varepsilon \lambda \mu}(T) - u_0\rangle - \int_{0}^{T}\langle \rho \varepsilon^2 \nu u''''_{\varepsilon \lambda \mu}(t) -2 \rho \varepsilon u'''_{\varepsilon \lambda \mu}(t) , u_{\varepsilon \lambda \mu}(t) - u_0\rangle \, \mathrm{d}t \\
&= \langle \varepsilon \nu\xi_{\varepsilon \lambda \mu}(T) - \rho \varepsilon^2  u'''_{\varepsilon \lambda \mu}(T), u_{\varepsilon \lambda \mu}(T) - u_0\rangle 
+ \langle  2\rho \varepsilon  u''_{\varepsilon \lambda \mu}(T), u_{\varepsilon \lambda \mu}(T) - u_0 \rangle \\
& \quad + \int_{0}^{T}\langle \rho  \varepsilon^2  \nu u'''_{\varepsilon \lambda \mu}(t)  , u_{\varepsilon \lambda \mu}'(t)\rangle \, \mathrm{d}t  
- \int_{0}^{T}\langle 2 \rho \varepsilon u''_{\varepsilon \lambda \mu}(t), u_{\varepsilon \lambda \mu}'(t)\rangle \, \mathrm{d}t \\
&= - \langle  \rho \varepsilon^2 u''_{\varepsilon \lambda \mu}(0) , u_1 \rangle - \int_{0}^{T} \rho  \varepsilon^2  \| u''_{\varepsilon \lambda \mu}(t) \|^2\, \mathrm{d}t - \rho \varepsilon \|u'_{\varepsilon \lambda \mu}(T) \|^2 + \rho  \varepsilon \|u_1 \|^2 ,
\end{align*}
where we used \eqref{euler:3} and \eqref{euler:4}. It follows that
\begin{align} \label{eq:mollificationargument0}
&\int_{0}^{T}\langle \varepsilon \nu \xi_{\varepsilon \lambda \mu}(t), u'_{\varepsilon \lambda \mu}(t)\rangle \, \mathrm{d}t \notag\\
&= -\langle  \rho \varepsilon^2 u''_{\varepsilon \lambda \mu}(0)  , u_1  \rangle - \int_{0}^{T} \rho \varepsilon^2 \| u''_{\varepsilon \lambda \mu}(t)\|^2 \, \mathrm{d}t \notag
\\
& \quad - \rho \varepsilon \|u'_{\varepsilon \lambda \mu}(T)\|^2 + \rho \varepsilon \|u_1\|^2 - \int_0^T \langle  \rho u''_{\varepsilon \lambda}(t) , u_{\varepsilon \lambda \mu}(t) - u_0 \rangle   \, \mathrm{d}t \notag \\
& \quad - \int_0^T \langle \nu \xi_{\varepsilon \lambda} (t), u_{\varepsilon \lambda \mu}(t) - u_0 \rangle    \, \mathrm{d}t - \int_0^T \langle \eta_{\varepsilon \lambda \mu}(t), u_{\varepsilon \lambda \mu}(t)  - u_0\rangle   \, \mathrm{d}t .
\end{align}

Observe now that
\begin{align*}
\int_0^T \langle \eta_{\varepsilon\lambda\mu}(t), J_\mu u_{\varepsilon\lambda \mu}(t) - u_0 \rangle \, \mathrm{d}t 
&= \int_0^T \langle \eta_{\varepsilon\lambda\mu}(t), u_{\varepsilon\lambda\mu}(t) - u_0 \rangle \, \mathrm{d}t - \mu \int_0^T \| \eta_{\varepsilon\lambda \mu}(t)\|^{2} \, \mathrm{d}t \\
& \leq \int_0^T \langle \eta_{\varepsilon\lambda\mu}(t), u_{\varepsilon\lambda\mu}(t) - u_0 \rangle \, \mathrm{d}t ,
\end{align*}
so that we have
\begin{align*}
	 \limsup_{\mu \rightarrow 0_+} \bigg(- \int_0^T \langle \eta_{\varepsilon\lambda\mu}(t),  u_{\varepsilon\lambda\mu}(t) - u_0 \rangle \, \mathrm{d}t \bigg)
	&\leq
 \limsup_{\mu \rightarrow 0_+} \bigg(- \int_0^T \langle \eta_{\varepsilon\lambda\mu}(t), J_\mu u_{\varepsilon\lambda\mu}(t) - u_0 \rangle \, \mathrm{d}t \bigg) \\
&\leq  - \int_0^T \langle \eta_{\varepsilon\lambda}(t), u_{\varepsilon\lambda}(t) - u_0 \rangle_X \, \mathrm{d}t . 
\end{align*}
Moreover, we derive from \eqref{convxilambda0} and \eqref{convstrongu0} that 
\begin{align*}
\int_0^T \langle \xi_{\varepsilon\lambda\mu}(t), u_{\varepsilon\lambda\mu}(t) - u_0 \rangle \, \mathrm{d}t \to \int_0^T \langle \xi_{\varepsilon \lambda}(t), u_{\varepsilon \lambda}(t) - u_0 \rangle \, \mathrm{d}t . 
\end{align*}

Using convergences \eqref{convu''lambdax0}, \eqref{convulambda0}, and \eqref{convweaku'0}, we have
\begin{align*}
\lim_{\mu \rightarrow 0_+} \left(-\langle \rho \varepsilon^2 u''_{\varepsilon \lambda \mu}(0) , u_1 \rangle_X \right) &= -\langle \rho \varepsilon^2 u''_{\varepsilon \lambda}(0) , u_1 \rangle_X,\\
\limsup_{\mu \rightarrow 0_+} \bigg(- \int_{0}^{T} \rho \varepsilon^2 \| u''_{\varepsilon \lambda \mu}(t)\|^2 \, \mathrm{d}t  \bigg) &\leq - \int_{0}^{T} \rho \varepsilon^2 \| u''_{\varepsilon \lambda}(t)\|^2   \, \mathrm{d}t,  \\
\limsup_{\mu \rightarrow 0_+} \big( - \rho \varepsilon \|u'_{\varepsilon \lambda \mu}(T)\|^2\big) &\leq - \rho \varepsilon \|u'_{\varepsilon \lambda}(T)\|^2 .
\end{align*}
Hence, from convergences \eqref{convulambda0}, \eqref{convstrongu0}, and \eqref{convxilambda0}, we can deduce that
\begin{align*}
& \limsup_{\mu \rightarrow 0_+} \int_{0}^{T} \langle \varepsilon \nu \xi_{\varepsilon \lambda \mu}(t), u'_{\varepsilon \lambda \mu}(t)  \rangle   \, \mathrm{d}t \\
&\leq - \langle \rho \varepsilon^2 u''_{\varepsilon \lambda}(0) , u_1 \rangle_X  - \int_{0}^{T} \rho \varepsilon^2 \| u''_{\varepsilon \lambda}(t)\|^2\, \mathrm{d}t - \rho \varepsilon \|u'_{\varepsilon \lambda}(T)\|^2 + \rho \varepsilon \|u_1\|^2 \\
& \quad - \int_{0}^{T} \langle \rho u''_{\varepsilon \lambda} , u_{\varepsilon \lambda}(t) - u_0 \rangle   \, \mathrm{d}t - \int_{0}^{T} \langle \nu  \xi_{\varepsilon \lambda}, u_{\varepsilon \lambda}(t) - u_0\rangle   \, \mathrm{d}t - \int_{0}^{T} \langle \eta_{\varepsilon \lambda}, u_{\varepsilon \lambda}(t) - u_0\rangle_X   \, \mathrm{d}t .
\end{align*}
Integrating by parts back, we then obtain 
\begin{align*}
\limsup_{\mu \rightarrow 0_+} \int_{0}^{T} \langle \varepsilon \nu \xi_{\varepsilon \lambda \mu}(t), u'_{\varepsilon \lambda \mu}(t)\rangle \, \mathrm{d}t 
\leq \int_{0}^{T} \langle \varepsilon \nu \xi_{\varepsilon \lambda}(t), u'_{\varepsilon \lambda}(t)\rangle \, \mathrm{d}t .
 \end{align*}

Recall that $\xi_{\varepsilon \lambda \mu} = \partial_H \bar \psi_\lambda(u_{\varepsilon \lambda \mu}')$. From the demiclosedness of the maximal monotone operator $u \mapsto  \partial_H \bar \psi_\lambda (u)$ in $L^2(0,T;H) \times L^2(0,T;H)$, we can conclude by Lemma \ref{lemma:doublelimsup} that $\xi_{\varepsilon\lambda}(t)$ coincides with $\partial_H \bar \psi_\lambda  (u'_{\varepsilon\lambda}(t))$ for a.a. $t \in (0,T)$ and that
\begin{align}
 \lim_{\mu \rightarrow 0_+} \int_{0}^{T} \langle \xi_{\varepsilon \lambda \mu}(t), u'_{\varepsilon \lambda \mu}(t)- u_1 \rangle \dt  = \int_{0}^{T} \langle \xi_{\varepsilon \lambda}(t), u'_{\varepsilon \lambda}(t)- u_1 \rangle \dt . \label{eqidenxieps0}
 \end{align}

\subsection{Passage to the limit as $\lambda \rightarrow 0$}\label{sec:limitlambda}
From the uniform estimates of Subsection \ref{subsec_a priori estimates}, 
we deduce the following convergences as $\lambda \rightarrow0$ (up to not relabeled subsequences)
\begin{align}
u_{\varepsilon\lambda }  &  \rightarrow u_ \varepsilon \text{ weakly in }  W^{4,q}(0,T;L^q(\Omega)+ X^{\ast})  \cap H^{2}(0,T;H)  
,\label{convulambda} 
\\
u_{\varepsilon \lambda }  &  \rightarrow  u_\varepsilon  \text{ weakly in }L^{2}(0,T;X) \text{,} \label{convulambdax}\\
 \bar J_\lambda  u^\prime_{\varepsilon \lambda }  &  \rightarrow w_\varepsilon \text{ weakly}^{\star} \text{ in } W^{1,2}(0,T;H)\cap L^{p}(0,T;V)  \text{,} \label{convuprimelambda}
\\
\xi_{\varepsilon \lambda }  &  \rightarrow\xi_\varepsilon \text{ weakly in }  W^{1,q}(0,T;L^q(\Omega)) \cap L^{p'
}(0,T;V^{\ast}) \text{,}\label{convxilambda}\\
\eta_{\varepsilon\lambda}  &  \rightarrow\eta_\varepsilon \text{ weakly in }L^{2}\left(
0,T;X^{\ast}\right)   .\label{convetalambda}
\end{align}
Convergences \eqref{convulambda}-\eqref{convetalambda} are sufficient in order to pass to the limit in equation \eqref{eqlimitlambda0} and obtain 
\begin{equation}
\rho \varepsilon^2 u''''_\varepsilon -2 \rho \varepsilon u'''_\varepsilon + \rho u''_\varepsilon - \varepsilon \xi_\varepsilon' + \xi_\varepsilon+\eta_\varepsilon=0 \ \ \text{ in }  L^q(\Omega)+X^{\ast}  \text{ for a.e. }  t \in (0,T) \text{.}
\label{eqlimitlambda}
\end{equation}
%
Moreover, it follows from \eqref{bnd_psilambda} and \eqref{convulambda} that $w_\varepsilon= u^\prime_{\varepsilon}.$ 
Since $X$ is compactly embedded in $H$, exploiting Ascoli's lemma,  we can deduce that 
\begin{align}
u_{\varepsilon \lambda }  &  \rightarrow u_\varepsilon \text{ strongly in } C([0,T];H), \label{convstrongu}
\end{align}
and up to a (not relabeled) subsequence  
\begin{align}
u_{\varepsilon \lambda } (t)  \rightarrow u_\varepsilon(t) \text{ strongly in } H \text{ for all }  t \in (0,T)\,. \label{convstrongut}
\end{align}
Furthermore, by interpolation we have that
$$
\|u\|_V \leq \|u\|^{\theta} \|u\|_X^{1-\theta}
$$
for $\theta = 1- d (\frac12 - \frac1p )$. Recalling that $u_{\varepsilon \lambda}$ is bounded in $L^2(0,T; X)$ and in $W^{1,2}(0,T; H)$, we can apply \cite[Corollary 8]{Simon} and, since $p <4$ and $d \in \{2,3\}$, we find that 
\begin{align}
u_{\varepsilon \lambda} \to u_{\varepsilon} \quad \mbox{ strongly in } L^p(0,T;V).\label{c:u:CV} 
\end{align}


Since $V^{\ast} \hookrightarrow X^{\ast}$ compactly and $L^2(\Omega) \hookrightarrow  X^{\ast} + L^q(\Omega)$ compactly, using an Aubin-Lions compactness lemma, we have 
\begin{align}
\xi_{\varepsilon \lambda }  &\rightarrow \xi_\varepsilon \text{ strongly in } L^{p'}(0,T; X^{\ast}) \cap C([0,T];X^*+L^q(\Omega)),  \label{convxilambdax} \\
u''_{\varepsilon \lambda }  &\rightarrow u''_\varepsilon \text{ strongly in } C([0,T]; X^{\ast}  + L^q(\Omega)), \label{convu''lambdax}
\end{align}
hence $u''_\varepsilon(T)=0$ from \eqref{euler:3l}. 
Furthermore, note that $\liminf_{\lambda \rightarrow 0} \|u'_{\varepsilon \lambda }(t)\| < \infty$ for all $t \in [0,T]$ due to \eqref{bnd_uprimeinf}. Hence we can take a subsequence $\lambda^t_n \rightarrow 0$ (possibly depending on $t$) such that
\begin{align}
u'_{\varepsilon \lambda }(t) &  \rightarrow u'_{\varepsilon }(t)  \text{ weakly in }  H. \label{convweaku'}
\end{align}

Finally, the initial data \eqref{euler:0}-\eqref{euler:1} and the final datum \eqref{euler:3} can be recovered in the limit $\lambda \rightarrow 0$ thanks to convergence \eqref{convulambda}.
The final datum \eqref{euler:4} can be recovered in the limit arguing as it follows. 
From \eqref{euler:4} and \eqref{convxilambdax} one can deduce that
\begin{align*}
\varepsilon \rho u'''_{\varepsilon \lambda } (T)= \xi_{\varepsilon \lambda }(T) &\rightarrow \xi_\varepsilon(T) \text{ strongly in } X^{\ast} + L^q(\Omega).
\end{align*}
On the other hand, it follows from \eqref{convulambda} that 
\begin{align}\label{bbb}
\varepsilon \rho u'''_{\varepsilon \lambda } (T) \rightarrow 	\varepsilon \rho u'''_{\varepsilon } (T)\text{ weakly in } L^q(\Omega)+X^{\ast}
\end{align}
 (see Appendix \S \ref{apdx}), hence $\varepsilon \rho u'''_{\varepsilon } (T) = \xi_{\varepsilon }(T) $ in $L^q(\Omega)+X^{\ast}$.

\subsubsection{Identification of the nonlinearities}

\paragraph{\it Identification of $\eta_{\varepsilon}$}

As in \S \ref{Ss:ident}, we can prove that
$\eta_{\varepsilon}  = - \Delta u_{\varepsilon}  + f(u_{\varepsilon} )
$ in $X^*$ almost everywhere in $(0,T)$.

\paragraph{\it Identification of $\xi_{\varepsilon}$}

Recall that $u_{\lambda \varepsilon} \in W^{4,2}(0,T; X^*) \cap H^{2}(0,T;L^2(\Omega))$ is such that $\rho u_{\lambda \varepsilon}''(T)=0$, $u_{\lambda \varepsilon}(0)=u_0$, $\rho u_{\lambda \varepsilon}'(0)=\rho u_1$,
and $\xi_{\varepsilon \lambda } \in W^{1,2
}(0,T;H) \cap L^{p'}(0,T;V^{\ast})$. Moreover, note that we have $\rho \varepsilon^2 u_{\lambda \varepsilon}''''-2\rho \varepsilon u_{\lambda \varepsilon}'''-\varepsilon \xi'_{\lambda \varepsilon} \in L^{2}(0,T;X^*)$.
Then, we have 
\begin{align*}
	\int_{0}^{T}\langle \varepsilon \nu \xi_{\varepsilon \lambda }(t), u'_{\varepsilon \lambda }(t)\rangle \, \mathrm{d}t 
	&= 	\langle \varepsilon \nu\xi_{\varepsilon \lambda }(T), u_{\varepsilon \lambda }(T) - u_0\rangle  - \int_{0}^{T}\langle \varepsilon \nu \xi'_{\varepsilon \lambda }(t), u_{\varepsilon \lambda }(t) - u_0\rangle \, \mathrm{d}t,
\end{align*}
where we integrated by parts. 

As a next step, we exploit \eqref{euler:lambda}, obtaining
\begin{align*}
\int_{0}^{T}\langle \varepsilon \nu \xi_{\varepsilon \lambda }(t), u'_{\varepsilon \lambda }(t)\rangle \, \mathrm{d}t 
	&= 	\langle \varepsilon \nu\xi_{\varepsilon \lambda }(T), u_{\varepsilon \lambda }(T) - u_0\rangle  \\
	&\quad  - \int_{0}^{T}\langle \rho \varepsilon^2 \nu u''''_{\varepsilon \lambda }(t) -2 \rho \varepsilon u'''_{\varepsilon \lambda }(t) , u_{\varepsilon \lambda }(t) - u_0\rangle_X \, \mathrm{d}t \\
	& \quad - \int_{0}^{T}\langle  \rho  u''_{\varepsilon \lambda }(t)+  \xi_{\varepsilon \lambda } + \eta_{\varepsilon \lambda } , u_{\varepsilon \lambda }(t) - u_0\rangle_X \, \mathrm{d}t .
\end{align*}
We integrate by parts the term
\begin{align*}
&\langle \varepsilon \nu\xi_{\varepsilon \lambda }(T), u_{\varepsilon \lambda }(T) - u_0\rangle 
- \int_{0}^{T}\langle \rho \varepsilon^2 \nu u''''_{\varepsilon \lambda }(t) -2 \rho \varepsilon u'''_{\varepsilon \lambda }(t) , u_{\varepsilon \lambda }(t) - u_0\rangle_X \, \mathrm{d}t \\
&= \langle \varepsilon \nu\xi_{\varepsilon \lambda }(T) - \rho \varepsilon^2  u'''_{\varepsilon \lambda }(T), u_{\varepsilon \lambda }(T) - u_0\rangle_X 
	 +	\langle  2\rho \varepsilon  u''_{\varepsilon \lambda }(T), u_{\varepsilon \lambda }(T) - u_0 \rangle \\
	& \quad + \int_{0}^{T}\langle \rho  \varepsilon^2  \nu u'''_{\varepsilon \lambda }(t)  , u_{\varepsilon \lambda }'(t)\rangle_X \, \mathrm{d}t  
	- \int_{0}^{T}\langle 2 \rho \varepsilon u''_{\varepsilon \lambda }(t) , u_{\varepsilon \lambda }'(t) \rangle \, \mathrm{d}t \\
	&=    - \langle  \rho \varepsilon^2 u''_{\varepsilon \lambda }(0) , u_1 \rangle_X - \int_{0}^{T} \rho  \varepsilon^2  \| u''_{\varepsilon \lambda }(t) \|^2 \, \mathrm{d}t  
	 - \rho \varepsilon \|u'_{\varepsilon \lambda }(T) \|^2 + \rho  \varepsilon \|u_1 \|^2  ,
\end{align*}
where we used \eqref{euler:3l} and \eqref{euler:4l}. 
It follows that
\begin{align} \label{eq:mollificationargument}
&\int_{0}^{T}\langle \varepsilon \nu \xi_{\varepsilon \lambda }(t), u'_{\varepsilon \lambda }(t)\rangle \, \mathrm{d}t \notag\\
&=
-	\langle  \rho \varepsilon^2 u''_{\varepsilon \lambda }(0)  , u_1  \rangle - \int_{0}^{T}   \rho \varepsilon^2 \| u''_{\varepsilon \lambda }(t)\|^2  \, \mathrm{d}t \notag
\\
& \quad -	 \rho \varepsilon \|u'_{\varepsilon \lambda }(T)\|^2   +	 \rho \varepsilon \|u_1\|^2 	 -	\int_0^T \langle  \rho u''_{\varepsilon \lambda } , u_{\varepsilon \lambda }(t) - u_0 \rangle   \, \mathrm{d}t \notag \\
& \quad  -	\int_0^T \langle \nu \xi_{\varepsilon \lambda }, u_{\varepsilon \lambda }(t) - u_0 \rangle    \, \mathrm{d}t 
- \int_0^T \langle \eta_{\varepsilon \lambda }, u_{\varepsilon \lambda }(t)  - u_0\rangle_X   \, \mathrm{d}t  
.
\end{align}

Observe now that

\begin{align*}
\liminf_{\lambda \rightarrow 0} \int_0^T \langle \eta_{\varepsilon\lambda}(t), u_{\varepsilon\lambda}(t) - u_0 \rangle_X \, \mathrm{d}t 
\geq \int_0^T \langle \eta_{\varepsilon}(t), u_{\varepsilon}(t) - u_0 \rangle_X \, \mathrm{d}t . 
\end{align*}
Furthermore, using \eqref{convxilambda} and \eqref{c:u:CV}, we have that
\begin{align*}
\lim_{\lambda \to 0_+} \int_0^T \langle \xi_{\varepsilon\lambda}(t), u_{\varepsilon\lambda}(t) - u_0 \rangle \, \mathrm{d}t 
\to \int_0^T \langle \xi_{\varepsilon}(t), u_{\varepsilon}(t) - u_0 \rangle \, \mathrm{d}t.
\end{align*}

Using convergences \eqref{convu''lambdax}, \eqref{convulambda}, and \eqref{convweaku'}, we have
\begin{align*}
\lim_{\lambda \rightarrow 0} \left(-	\langle  \rho \varepsilon^2 u''_{\varepsilon \lambda }(0)  , u_1  \rangle_{X \cap L^{q'}(\Omega)}   \right) &=   -	\langle  \rho \varepsilon^2 u''_{\varepsilon  }(0)  , u_1  \rangle_{X \cap L^{q'}(\Omega)}   ,\\
\limsup_{\lambda \rightarrow 0} \bigg(- \int_{0}^{T}   \rho \varepsilon^2 \| u''_{\varepsilon \lambda }(t)\|^2 \, \mathrm{d}t  \bigg) &\leq - \int_{0}^{T}   \rho \varepsilon^2 \| u''_{\varepsilon }(t)\|^2   \, \mathrm{d}t,  \\
\limsup_{\lambda \rightarrow 0} \big( -	 \rho \varepsilon \|u'_{\varepsilon \lambda }(T)\|^2\big) &\leq  - \rho \varepsilon \|u'_{\varepsilon }(T)\|^2 .
\end{align*}
Hence, from convergences \eqref{convulambda},
\eqref{convstrongu} and \eqref{convxilambda}, we can deduce that
\begin{align*}
& \limsup_{\lambda \rightarrow 0} 	\int_{0}^{T} \langle \varepsilon \nu  \xi_{\varepsilon \lambda }(t), u'_{\varepsilon \lambda }(t)  \rangle  \, \mathrm{d}t \\
&\leq  	 
-	\langle  \rho \varepsilon^2 u''_\varepsilon(0)  , u_1 \rangle_{X \cap L^{q'}(\Omega)}  - \int_{0}^{T}   \rho \varepsilon^2 \| u''_{\varepsilon }(t)\|^2  \, \mathrm{d}t -	 \rho \varepsilon \|u'_{\varepsilon }(T)\|^2  +	 \rho \varepsilon \|u_1\|^2 \\
& \quad 	-	\int_{0}^{T} \langle  \rho u''_{\varepsilon  } , u_{\varepsilon  }(t)  - u_0 \rangle  \, \mathrm{d}t  -	\int_{0}^{T} \langle \nu  \xi_{\varepsilon  }, u_{\varepsilon}(t)  - u_0\rangle   \, \mathrm{d}t 
-	\int_{0}^{T} \langle \eta_{\varepsilon }, u_{\varepsilon}(t)  - u_0\rangle_X   \, \mathrm{d}t .
\end{align*}
Integrating back by parts, we then obtain 
 \begin{align*}
&\limsup_{\lambda \rightarrow 0} \int_{0}^{T} \langle \varepsilon \nu \xi_{\varepsilon \lambda }(t), u'_{\varepsilon \lambda }(t)\rangle   \, \mathrm{d}t 
 \leq  \int_{0}^{T} \langle \varepsilon \nu \xi_{\varepsilon }(t), u'_{\varepsilon}(t)\rangle  \, \mathrm{d}t .
 \end{align*} 
 
 Recall that $\xi_{\varepsilon \lambda} = \partial_H \bar \psi_\lambda  (u_{\varepsilon \lambda }')$ and $\partial_H \bar \psi_\lambda  (u'_{\varepsilon \lambda }) \subset \partial_V \bar \psi_\lambda  (u'_{\varepsilon \lambda }) = \mathrm{d}_{V} \bar \psi_\lambda  (u'_{\varepsilon \lambda })$.
From the demiclosedness of the maximal monotone operator $u 
\mapsto  \mathrm{d}_{V} \bar \psi_\lambda  (u)$ in $L^p(0,T;V) \times L^{p'}(0,T;V^\ast)$, we can conclude by Lemma \ref{lemma:doublelimsup} that $\xi_\varepsilon(t)$ coincides with $ \mathrm{d}_{V} \bar \psi_\lambda  (u'_\varepsilon(t))$ for a.a. $t \in (0,T)$, as well as 
 \begin{align}
 \lim_{\lambda \rightarrow 0} \int_{0}^{T} \langle \xi_{\varepsilon \lambda}(t), u'_{\varepsilon \lambda}(t)- u_1 \rangle \dt  = \int_{0}^{T} \langle \xi_{\varepsilon}(t), u'_{\varepsilon }(t)- u_1 \rangle \dt. \label{eqidenxieps}
 \end{align}

\subsection{Minimization of the WIDE functional $I_{\rho\varepsilon}$}\label{sec_minimum}
 Our next aim is to show that the above-determined limit 
$u_{\varepsilon}$ is the unique minimizer of 
$I_{\rho \varepsilon}$  on  $K(u_{0}, u_1)$. 
More precisely, we prove that the strong solution 
$(u_\epsi, \xi_\epsi, \eta_\epsi)$ of the Euler-Lagrange problem 
(\ref{euler1})-(\ref{euler8}) is a minimizer of $I_{\rho\varepsilon}$ in $\mathcal{V}$.

Note that  $ K(u_{0}, u_1) \subset K_\lambda(u_{0}, u_1) 
$, $\bar \psi_{\lambda}\leq\psi$,  and $\bphi\leq\phi$.
By passing to the limit as $\mu \rightarrow0$ and $\lambda\rightarrow0$ and using the
Dominated Convergence Theorem, we have
\[
I_{\rho\varepsilon}^{\lambda\mu}(v)\rightarrow I_{\rho\varepsilon}(v)\text{ \ \ \ }
\forall v\in  K(u_{0}, u_1) \text{.}%
\]
As $u_{\varepsilon\lambda \mu}$ is a global minimizer of $I_{\rho\varepsilon}^{\lambda\mu}$, we have
\[
I_{\rho\varepsilon}^{\lambda\mu}(v)\geq  I_{\rho\varepsilon}^{\lambda\mu}(u_{\varepsilon\lambda \mu})\text{ \ \ \ } \forall v  \in K(u_{0}, u_1) \text{.}%
\]
Convergences (\ref{convulambda0})-(\ref{convulambdax0}) as well as (\ref{convulambda})-(\ref{convulambdax}) together with the lower
semicontinuity of the convex integrals  
$u\mapsto \int_{0}^{T}\mathrm{e}^{-t/\varepsilon
} \frac{\varepsilon^2 \rho}{2}\|u'' (t)\|^2\mathrm{d}t$, $u\mapsto$ $ \int_{0}^{T}\mathrm{e}^{-t/\varepsilon
}\varepsilon \nu \psi(u'(t))\mathrm{d}t$, and $u \mapsto\int_{0}%
^{T}\mathrm{e}^{-t/\varepsilon}\phi\left(  u(t)\right)  \mathrm{d}%
t\mathcal{\ }$in $L^{p}(0,T;V)$ give 
\begin{align*}
\lefteqn{
\liminf_{\lambda\rightarrow0} \liminf_{\mu\rightarrow0} I_{\rho\varepsilon}^{\lambda\mu}(u_{\varepsilon\lambda \mu})
}\\
&= \liminf_{\lambda\rightarrow0} \liminf_{\mu\rightarrow0}  \int_{0}^{T}\mathrm{e}^{-t/\varepsilon
}\Big( \frac{\varepsilon^2 \rho}{2}\|u''_{\varepsilon\lambda\mu}(t)\|^2  + \varepsilon\nu \bar \psi_\lambda (u_{\varepsilon\lambda \mu}'(t))+\bphi\left(  u_{\varepsilon\lambda \mu}(t)\right)  \Big)  \mathrm{d}t\\
&\geq \liminf_{\lambda\rightarrow0} \liminf_{\mu\rightarrow0}  \int_{0}^{T}\mathrm{e}^{-t/\varepsilon
}\Big( \frac{\varepsilon^2 \rho}{2}\|u''_{\varepsilon\lambda\mu}(t)\|^2 +  \varepsilon\nu \bar \psi_\lambda (u_{\varepsilon\lambda\mu}'(t))+\phi\left(
J_{\mu}u_{\varepsilon\lambda\mu}(t)\right)  \Big)  \mathrm{d}t\\
&  \geq \liminf_{\lambda\rightarrow0}\int_{0}^{T}\mathrm{e}^{-t/\varepsilon
}\Big( \frac{\varepsilon^2 \rho}{2}\|u''_{\varepsilon\lambda}(t)\|^2 +  \varepsilon\nu \psi (\bar J_\lambda u_{\varepsilon\lambda}'(t)) +\phi\left(
u_{\varepsilon\lambda}(t)\right)  \Big)  \mathrm{d}t\\
&  \geq\int_{0}^{T}\mathrm{e}^{-t/\varepsilon}\Big(  \frac{\varepsilon^2 \rho}{2}\|u''_{\varepsilon}(t)\|^2  + \varepsilon
\nu \psi(u_{\varepsilon}'(t))+\phi\big(  u_{\varepsilon}(t)\big)
\Big)  \mathrm{d}t\text{.}%
\end{align*}
As a consequence, we have
\[
I_{\rho\varepsilon}(v)\geq I_{\rho \varepsilon}(u_{\varepsilon})\text{ \ \ \
}\forall v\in  K(u_{0}, u_1)  \text{.}%
\]
Namely, $u_{\varepsilon}$ minimizes $I_{\rho\varepsilon}$  on $
K(u_{0}, u_1) $, hence $u_{\varepsilon}\in D( \partial I_{\rho\varepsilon})$ and $0 \in \partial_{\mathcal{V}}I_{\rho\varepsilon} (u_\varepsilon)$.


 \section{The causal limit} \label{sec_causallimit}
In this section, we proceed to the proof of Theorem \ref{main_thm} by
checking that, up to subsequences, $u_\epsi$ converges to a strong
solution of the target problem  \eqref{prob1}-\eqref{prob5}.

Starting from the uniform
estimates derived in Subsection \ref{subsec_a priori estimates}, we deduce the following 
bounds, valid for all $T>0$, 
\begin{align} 
&\| u'_{\varepsilon} \|_{L^p(0, T ; V)}  + \| \xi_{\varepsilon} \|_{L^{p'}(0,T ; V^*)}   + \int_{0}^{T} \psi(u'_{\varepsilon} (t)) \d t  \leq C (T) , \label{eq:causalestimates}
\end{align}
whence
 \begin{align} 
	&\sup_{t \in (0,T)} \|u(t)\|_{V 
	} \leq C\label{bnd_supucausal}.
\end{align}
Furthermore, using the
lower semicontinuity of norms and of $\phi$ and \eqref{growth condition dphi0}, we obtain
\begin{align} 
&\| u_{\varepsilon}  \|_{L^2(0,T; X)}	+ \int_{0}^{T} \phi(u_{\varepsilon} (t)) \d t + \| \eta_{\varepsilon}  \|_{L^{2}(0,T;  X^*)} \notag \\
&+ \|\rho \varepsilon^2 u''''_{\varepsilon} - 2 \rho \varepsilon u'''_{\varepsilon} + \rho u''_{\varepsilon}  - \varepsilon \nu \xi'_{\varepsilon}  \|_{ L^{p'}(0,T;X^{\ast})} \leq C(T ).
\end{align}
Up to not relabeled subsequences, these bounds entail the following
convergences as
$\varepsilon\rightarrow0+:$%
\begin{align}
u_{\varepsilon}  &  \rightarrow u\text{ weakly in }W^{1,p}(0,T;V)\text{ and
	strongly in }  C([0,T]; V),\label{convu} \\
u_{\varepsilon}  &  \rightarrow u\text{ weakly in }L^{2}(0,T;X) \text{,} \label{convux}\\
\xi_{\varepsilon}  &  \rightarrow\xi\text{ weakly in }L^{p'%
}(0,T;V^{\ast})\text{,}\label{c1}\\
\rho \varepsilon^2 u''''_{\varepsilon}- 2 \rho \varepsilon u'''_{\varepsilon}+ \rho u''_{\varepsilon} - \varepsilon\xi_{\varepsilon}'  &  \rightarrow \zeta \text{ weakly in }%
 L^{p'}(0,T;X^{\ast}) \text{,}\label{c2}\\
\eta_{\varepsilon}  &  \rightarrow\eta\text{ weakly in }L^{2}(
0,T;X^{\ast}). \label{c3}
\end{align}
Convergences \eqref{c1}-\eqref{c3} are sufficient in order to pass to the limit in equation \eqref{eqlimitlambda} and obtain 
\begin{equation}
\zeta + \nu \xi+\eta=0 \ \ \text{ in }X^{\ast}\text{ a.e. in } (0,T)\text{.}
\label{eqX*}
\end{equation}


 \subsubsection*{Initial conditions}
 Note that the strong convergence in \eqref{convu} follows from the compact embedding $W^{1,p}(0,T;V) \cap L^{2}(0,T;X)
 \hookrightarrow C([0,T];B) $ for some Banach space $B$ such that
 $X\hookrightarrow B$ compactly  
 and from an application of the Aubin-Lions Lemma. 
 The initial condition $u(0)=u_0$ is a direct consequence of \eqref{convu}.
 
 Furthermore, by defining $w_\varepsilon:=\rho \varepsilon^2 u'''_{\varepsilon} - 2 \rho \varepsilon u''_{\varepsilon} + \rho u'_{\varepsilon}  - \varepsilon \nu \xi_{\varepsilon}$,
 we observe that the uniform bound $\| w_\varepsilon\|_{C([0,T]; X^\ast)} \leq C$ follows from \eqref{eq:causalestimates}. In particular, we can take a subsequence $\epsi_n \rightarrow 0$ such that
 \begin{align*}
 	w_{\varepsilon}(0) &  \rightarrow w_0  \text{ weakly in }  X^\ast.
 \end{align*}
 Finally, by testing $w_{\varepsilon}(0)$ with $v \in X$, integrating by parts and letting $\epsi \rightarrow 0$, we can deduce that $w_0= \rho u_1$. 

  \subsubsection*{Identification of the nonlinearities}
  As a final step, we need to identify the limits $\zeta, \xi$, and $\eta $ in \eqref{eqX*}.
\par{\textit{Identification of the limit \({\zeta\equiv \rho u'' } \).}} 
Let \(\varphi \in C^\infty_c([0,T]\times \Omega )\). By integrations by parts we obtain
\begin{align*}
&\int_{0}^T \langle \rho \varepsilon^2 u''''_{\varepsilon}- 2 \rho \varepsilon u'''_{\varepsilon}+\rho u''_{\varepsilon}- \varepsilon \nu \xi_{\varepsilon}'  , \varphi \rangle_X \d t  \\
& = \int_{0}^T \langle \rho \varepsilon^2 u''_{\varepsilon}   , \varphi'' \rangle \d t + \int_{0}^T \langle 2 \rho \varepsilon u''_{\varepsilon} , \varphi' \rangle \d t - \int_{0}^T \langle  \rho u'_{\varepsilon}  , \varphi' \rangle \d t -
\int_{0}^T \langle  \varepsilon \nu\xi_{\varepsilon}  , \varphi ' \rangle  \d t \\
&\rightarrow - \int_{0}^T \langle  \rho u'  , \varphi' \rangle_{X }\d t  =  \int_{0}^T \langle  \rho u'' , \varphi \rangle_{X }\d t\text{ \ \ as } \varepsilon \rightarrow 0,
\end{align*}
where we used \eqref{bnd_uprimeprime}, \eqref{bnd_dpsiT}, and \eqref{convu} to pass to the limit. Hence, $\zeta \equiv \rho u''$ almost everywhere.

\par{\textit{Identification of the limit \({\eta \equiv - \Delta u + f(u)}\).}}
Arguing as in Subsection \ref{sec:limitlambda}, we exploit the weak convergence \eqref{convux} and the a.e. pointwise convergence \eqref{convu} in order to obtain that $\eta_\epsi = - \Delta u_\epsi + f(u_\epsi) \to   - \Delta u + f(u) $ almost everywhere in $X^*$. Hence, $\eta \equiv - \Delta u + f(u) $ in $X^\ast$.

\par{\textit{Identification of the limit \({\xi \equiv  \mathrm{d}_{V}\psi(u') }\).}}
Integrating \eqref{eq:nestedt} over $(0,T)$ and integrating by parts, we obtain 
\begin{align} \notag 
&\int_{0}^{T} \int_{0}^{t} \langle \nu \xi_{\varepsilon \lambda  \mu}(s), u'_{\varepsilon \lambda  \mu}(s)- u_1 \rangle \ds \dt \\
&\leq 
\frac{\rho \varepsilon^2}{2}\int_0^T \|u_{\varepsilon \lambda  \mu}''(t)\|^2 \dt + \rho \varepsilon  \|u'_{\varepsilon \lambda  \mu}(T)- u_1\|^2 -\frac{\rho }{2} \int_0^T \|u'_{\varepsilon \lambda \mu}(t)-u_1\|^2 \dt  + \varepsilon T \nu \psi(u_1) \notag \\ &
\quad + \int_0^T \varepsilon  \nu \langle \xi_{\varepsilon \lambda \mu}(t), u'_{\varepsilon \lambda \mu}(t)-u_1 \rangle \dt  
-\int_0^T \bphi(u_{\varepsilon \lambda \mu}(t))\dt  + T\phi(u_0) + \int_{0}^T  \int_0^t \langle \eta_{\varepsilon \lambda  \mu}(s), u_1 \rangle \ds \dt \notag \\
& \leq \varepsilon C -\frac{\rho }{2} \int_0^T \|u'_{\varepsilon \lambda \mu}(t)-u_1\|^2 \dt  
-\int_0^T \phi( J_\mu u_{\varepsilon \lambda \mu}(t))\dt  + T\phi(u_0) + \int_{0}^T  \int_0^t \langle \eta_{\varepsilon \lambda \mu}(s), u_1 \rangle \ds \dt \notag 
\end{align}
due to \eqref{bnd_uprimeprime}, \eqref{bnd_uprimefixedT}, \eqref{bnd_uprimeT}, and \eqref{bnd_dpsiT}. Then, we take the $\limsup$ of both sides as $\mu \rightarrow 0$ and then as ${\lambda \rightarrow 0}$.
On the left-hand side we pass to the limit thanks to \eqref{eqidenxieps0} and \eqref{eqidenxieps}, while on the right-hand side we use \eqref{convulambda0}, \eqref{convetalambda0}, \eqref{convulambda} and \eqref{convetalambda} together with 
\[
\liminf_{\lambda \rightarrow 0}  \liminf_{\mu \to 0} \phi ( J_\mu  u_{\varepsilon \lambda \mu}) \geq \liminf_{\lambda \rightarrow 0} \phi (u_{\varepsilon \lambda}) \geq \phi ( u_{\varepsilon}) \,,
\]
which is given by the weak lower continuity of $\phi$ in $X$, \eqref{convulambdax0} with $v_\varepsilon=u_\varepsilon$ and \eqref{convulambdax}. In particular, we get 
\begin{align*} 
&\int_{0}^{T} \int_{0}^{t} \langle \nu \xi_{\varepsilon }(s), u'_{\varepsilon}(s)- u_1 \rangle \ds \dt \\
& \leq  -\frac{\rho }{2} \int_0^T \|u'_{\varepsilon}(t)-u_1\|^2 \dt  
-\int_0^T \phi(u_{\varepsilon }(t))\dt  + T\phi(u_0) + \int_{0}^T  \int_0^t \langle \eta_{\varepsilon }(s), u_1 \rangle \ds \dt \notag 	.
\end{align*}
By arguing as above, convergences \eqref{convu}, \eqref{convux}, and \eqref{c3} give 
 \begin{align} \label{ineqcausal}
 &\limsup_{\varepsilon \rightarrow 0} \int_{0}^{T} \int_{0}^{t} \nu \langle \xi_{\varepsilon }(s), u'_{\varepsilon}(s)- u_1 \rangle \ds \dt \\
 & \leq  -\frac{\rho }{2} \int_0^T \|u'(t)-u_1\|^2 \dt  
 -\int_0^T \phi(u(t))\dt  + T\phi(u_0) + \int_{0}^T  \int_0^t \langle \eta(s), u_1 \rangle_X \ds \dt \notag  	\\
 & \leq- \int_{0}^T  \int_0^t  \langle \rho u''(s) + \eta(s), u'(s)- u_1 \rangle_X \ds \dt = \int_{0}^T  \int_0^t \nu\langle  \xi(s), u'(s)- u_1 \rangle \ds \dt \notag 
 \end{align}
 where we integrated by parts and we used \eqref{eqX*}.
  In particular, recalling Remark \ref{rem:limsup}, we have 
  \begin{align} 
 	\limsup_{\varepsilon \rightarrow 0} \int_{0}^{T}(T-t) \nu  \langle \xi_{\varepsilon }(s), u'_{\varepsilon}(s)- u_1 \rangle  \dt 
 	\leq \int_{0}^T  (T-t) \nu\langle  \xi(s), u'(s)- u_1 \rangle  \dt \notag .
 \end{align}
From the demiclosedness of the maximal monotone operator $u 
\mapsto \mathrm{d}_V \psi(u(\cdot))$ in  ${\mathcal V} \times
{\mathcal V}^*$  we can conclude by Lemma \ref{lemma:doublelimsup} that $\xi(t)$ coincides with $\mathrm{d}_V \psi (u'(t))$ for a.a. $t \in (0,T)$, and 
 \begin{align} \label{eqidenxi}
 \lim_{\varepsilon \rightarrow 0} \int_{0}^{T}  \int_{0}^{t} \langle \xi_{\varepsilon}(s), u'_{\varepsilon}(s)- u_1 \rangle \ds \dt  = \int_{0}^{T} \int_{0}^{t}  \langle   \xi(s), u'(t)- u_1 \rangle \ds \dt . 
 \end{align} 

\section{The viscous limit} \label{sec_viscouslimit}
In this section we discuss the viscous limit $\rho \rightarrow 0$. 
In particular, we prove Theorem \ref{viscous_thm}.i and Theorem \ref{viscous_thm}.ii  in Subsection \ref{subsec:viscgammconv} and Subsection \ref{subsec:viscconv}, respectively. 
Furthermore, we observe that the statement of Theorem \ref{viscous_thm}.iii can be established by arguing as done in Section \ref{sec_causallimit}, but obtaining $\zeta \equiv 0$ as also $\rho \rightarrow 0$.

\subsection{Gamma-convergence of $I_{\rho\varepsilon}$ in the viscous limit} \label{subsec:viscgammconv}
We consider the WIDE functionals $I_{\rho\varepsilon}$ as defined in \eqref{WIDEfunc}, namely
$I_{\rho\varepsilon}: \mathcal{V} \rightarrow(-\infty,\infty]$ such that
\begin{align*}
	I_{\rho\varepsilon}(u)  &  =\left\{
	\begin{array}
		[c]{cl}%
		\displaystyle{\int_{0}^{T}\mathrm{e}^{-t/\varepsilon}\Big(\frac{\varepsilon^2	\rho}{2} \int_{\Omega}|u''(t)|^2 \d x  +\varepsilon
		\nu 	\psi(u'(t))+\phi(u(t))\Big)\mathrm{d}t} & \text{if }u\in K(u_{0}, u_1%
		)\text{,}\\
		\infty & \text{else,}%
	\end{array}
	\right.
\end{align*}
where
\begin{align*}
	K(u_{0}, u_1)  
	=\{u\in H^{2}( 0, T;L^2(\Omega))   \cap W^{1,p}( 0, T;  V ) \cap L^{2}( 0, T;X)   :  u(0)=u_{0}, \rho u'(0)= \rho u_1 
	\} ,
\end{align*}
hence $D(	I_{\rho\varepsilon})= 	K(u_{0}, u_1)  $.
The aim of this subsection is to prove that, by letting $\rho\rightarrow 0$, the sequence of functionals $I_{\rho\varepsilon}$ to $\Gamma$- converges with respect to the strong topology in $\mathcal{V}$ to
\begin{align*}
	\bar{I}_{\varepsilon}(u)  &  =\left\{
	\begin{array}
		[c]{cl}%
		\displaystyle{\int_{0}^{T}\mathrm{e}^{-t/\varepsilon}\Big(\varepsilon
			\nu \psi(u'(t))+\phi(u(t))\Big)\mathrm{d}t} & \text{if }u\in \bar{K}(u_{0}
		)\text{,}\\
		\infty & \text{else,}%
	\end{array}
	\right.
\end{align*}
where
\begin{align*}
	\bar{K}(u_{0})  
	&  =\{W^{1,p}( 0, T;  V ) \cap L^{2}( 0, T;X)   :  u(0)=u_{0}
	\} ,
\end{align*}
hence $D(	\bar{I}_{\varepsilon})= 	\bar{K}(u_{0})  $.


\begin{proof}[Proof of Theorem \ref{viscous_thm}.i]
	\par{$\Gamma-\liminf$ inequality:} 
Consider a sequence $u_\rho$ in $\mathcal{V}$ converging to $u$ in $\mathcal{V}$. 
With no loss of generality we can assume that $\sup_\rho I_{\rho\varepsilon} (u_\rho)< \infty$.
Then, $u_\rho$ is uniformly bounded in $W^{1,p}( 0, T;  V ) \cap L^{2}( 0, T;X)$ so that, by extracting some not relabeled subsequence, one has that $u_\rho$ converges to $u$ weakly in $W^{1,p}( 0, T;  V ) \cap L^{2}( 0, T;X)$.
As a consequence, $\liminf_{\rho \rightarrow 0} I_{\rho\varepsilon}(u_\rho) \geq \liminf_{k \rightarrow \infty} \bar{I}_\varepsilon(u_{\rho_k}) \geq \bar{I}_\varepsilon(u)$.
\par{Existence of a recovery sequence:} Let $u \in \mathcal{V}$. If $u \notin \bar{K}(u_{0})  $ or if $u \in K(u_{0}, u_1)  $, we can choose $u_\rho=u$ and trivially conclude that $  I_{\rho\varepsilon}(u_\rho) \rightarrow \bar{I}_\varepsilon(u)$ as $\rho \rightarrow 0$.
If $u \in \bar{K}(u_{0})  \setminus  K(u_{0}, u_1)$,
we consider a sequence of mollifiers $g_{\tilde{\rho}}$, namely $g_{\tilde{\rho}}(t)= {\tilde{\rho}}^{-1} g(t/{\tilde{\rho}})$ with $g \in C^\infty_c(\mathbb{R})$ and $\int_{\mathbb{R}}g(t)\dt =1$. 
 We define 
$(g_{\tilde{\rho}} \ast u)(t):= \int_{-1}^{T} g_{\tilde{\rho}}(t-s)u(s) \d s $ by setting $u(t)=u_0$ for $t\in (-1,0)$ and $u(t)=0$ for $t \in \mathbb{R}\setminus (-1,T] $ , hence $g_{\tilde{\rho}} \ast u(t)$ is well-defined for every $t \in \mathbb{R}$.
Then we have that $g_{\tilde{\rho}} \ast u \rightarrow u $ converges to $u$ in $W^{1,p}(0,T;V)\cap L^2(0,T;X)$ and in particular $(g_{\tilde{\rho}} \ast u)(0) \rightarrow u_0 $ in $V$ as ${\tilde{\rho}} \rightarrow 0$. 
 
As a next step, we define $u_{\tilde{\rho}}= g_{\tilde{\rho}} \ast u  + u_0 - (g_{\tilde{\rho}} \ast u)(0) +(u_1-(g_{\tilde{\rho}} \ast u)'(0))\zeta^{\tilde{\rho}}$, where $\zeta^{\tilde{\rho}}(t)=t \exp(-t/{\tilde{\rho}})$. 
Note that $(u_{\tilde{\rho}})'(0)=u_1$ and $\zeta^{\tilde{\rho}}
\rightarrow 0 $ in $W^{1,p}(0,T)$ as ${\tilde{\rho}} \rightarrow
0$. As a consequence, $u_{\tilde{\rho}} \in K(u_0,u_1)$ and $u_{\tilde{\rho}} \rightarrow u$ in $W^{1,p}(0,T;V)\cap L^2(0,T;X)$ as ${\tilde{\rho}} \rightarrow 0$.
Furthermore, we have $|(\zeta_{\tilde{\rho}})'' |^2_{L^2(0,T)}\leq C
/\tilde{\rho}^3$ and $(g_{\tilde{\rho}} \ast u)''= g'_{\tilde{\rho}}
\ast u'$, whence we deduce that  $\|(g_{\tilde{\rho}} \ast u)''\|_{L^2(0,T;L^2(\Omega))} \leq \|g_{\tilde{\rho}}'\|_{L^1(0,T)} \| u'\|_{L^2(0,T;L^2(\Omega))} \leq C /\tilde{\rho}$.
By choosing $\tilde{\rho}= \rho^{1/s}$ for some $s>3$, we obtain 
$|I_{\rho\varepsilon}({u}_{\tilde\rho})	-\bar{I}_{\varepsilon}(u_{\tilde\rho})| \leq C
\rho^{1-3/s}$, whose right-hand side goes to zero as $\rho \rightarrow
0$.  On the other hand, having the strong convergences above, by means
of the Dominated Convergence Theorem, we can conclude that
$I_{\rho\varepsilon}({u}_{ \tilde\rho}) \rightarrow \bar{I}_{\varepsilon}({u}) $ as $\rho \rightarrow 0$. 
\end{proof}

\subsection{Viscous limit of the doubly nonlinear wave equation} \label{subsec:viscconv}
Consider any solution  $(u_\rho, \xi_\rho, \eta_\rho)$  to our target problem \eqref{prob1}-\eqref{prob5} belonging to
\[
[W^{1,p}(0,T;V)\cap L^{2}( 0,T ;X)] \times L^{p'}(0,T;V^{\ast})\times   L^{2}\left( 0,T;X^{\ast}\right) =: \mathcal{Y}\text{.}%
\] 
The aim of this subsection is to show that $(u_\rho, \xi_\rho, \eta_\rho)$  converges (with respect to the weak topology of $\mathcal{Y}$ and up to a subsequence) to $(\bar{u}, \bar\xi, \bar\eta)$  satisfying
\begin{align} \label{eq:limitrho}
	\nu \bar\xi + \bar\eta =0 &\quad \text{ in } X^*  \text{ a.e. in } (0,T),\\
	\bar\xi = \mathrm{d}_V\psi(\bar{u}') &\quad \text{ in } V^*  \text{ a.e. in } (0,T),\, \\
    \bar\eta = - \Delta \bar{u} + f(\bar{u}) &\quad \text{ in } X^*  \text{ a.e. in } (0,T),\,
\end{align}
with initial datum $\bar{u}(0)=u_0$.

\begin{proof}[Proof of Theorem \ref{viscous_thm}.ii]
First, we will show that the following estimates hold for $(u_\rho, \xi_\rho, \eta_\rho) \in \mathcal{Y}$ solving \eqref{prob1}-\eqref{prob5}
\begin{align} \label{eq:viscousest1}
	\rho^{1/2} \| u_\rho'\|_{L^\infty(0,T;L^2(\Omega))}+ \| u_\rho'\|_{L^p(0,T;V)} + \| u_\rho\|_{L^\infty(0,T;X)} + \| u_\rho\|_{L^r(0,T;L^r(\Omega))} \leq C, 	
	\\ 	\| \eta_\rho\|_{L^2(0,T;X^*)} + \| \xi_\rho\|_{L^{p'}(0,T;V^*)} \leq C.\label{eq:viscousest2}
\end{align}
By comparison in \eqref{prob1}, we also obtain
\begin{align} \label{eq:viscousest3}
	\rho \|u_\rho''\|_{L^{2}(0,T;X^{\ast})+L^{p'}(0,T;V^{\ast})} \leq C \text{.}
\end{align}
In order to prove \eqref{eq:viscousest1}-\eqref{eq:viscousest2}, we proceed by arguing on increments.
For any arbitrary constant $\tau > 0$, we define a backward difference operator $\delta_\tau$ by
\[
\delta_\tau\chi (t) = \frac{\chi(t) - \chi(t-  \tau)}{\tau}
\]
for functions $\chi$ defined on $[0,T]$ with values in a vector space and for $t \geq \tau$. 
Test \eqref{prob1} with $\delta_\tau u_\rho$, integrate on $(\tau,t)$ and by parts, obtaining
\begin{align*}
	&-\frac{\rho}{2\tau} \int_{t -\tau}^t \|u_\rho'(s)\|^2 \ds + \frac{\rho}{2\tau} \int_{0}^\tau \|u_\rho'(s)\|^2  \ds
	+  \langle u_\rho'(t), \frac{\rho}{\tau}(u_\rho(t)- u_\rho(t-\tau))\rangle \\
	& - \langle u_\rho'(\tau)  , \frac{\rho}{\tau} (u_\rho(\tau)-
   u_\rho(0)) \rangle  - \frac{\rho}{2\tau}  \int_\tau^{t} \|u_\rho'(s- \tau) - u_\rho'(s)\|^2 \ds+ \int_{\tau}^{t} \langle\nu \xi_\rho(s) , \delta_\tau u_\rho(s)\rangle \ds \\
	& + \frac1{2\tau} \int_{t -\tau}^t \|\nabla u_\rho(s)\|^2 \d s - \frac1{2\tau} \int_{0}^\tau \|\nabla u_\rho(s)\|^2 \ds
	-  \frac1{2\tau} \int_\tau^{t}\|\nabla u_\rho(s- \tau) - \nabla u_\rho(s)\|^2  \ds\\
	& + \int_{\tau}^{t}\langle f(u_\rho(s)) , \delta_\tau u_\rho(s)\rangle \ds =0.
\end{align*}
By letting $\tau \rightarrow 0$, the regularity of $u_\rho$ and the  Lebesgue  Differentiation Theorem allow us to conclude that
\begin{align*}
	&-\frac{\rho}{2}  \|u_\rho'(t)\|^2  + \frac{\rho}{2}  \|u_\rho'(0)\|^2
	+ \rho \|u_\rho'(t) \|^2
	- \rho \|u_\rho'(0)\|^2  + 0+ \int_{0}^{t} \langle \nu \xi_\rho(s) ,  u_\rho'(s)\rangle \ds \\
	& + \frac1{2}  \|\nabla u_\rho(t)\|^2 - \frac1{2} \|\nabla u_\rho(0)\|^2
	+ 0 
	+ \int_{0}^{t}\langle f(u_\rho(s)) ,u_\rho'(s)\rangle \ds =0
\end{align*}
for almost every $t \in (0,T)$. This can be rewritten as 
\begin{align*}
	&\frac{\rho}{2}  \|u_\rho'(t)\|^2  + \int_{0}^{t} \langle\nu \xi_\rho(s) ,  u_\rho'(s)\rangle \ds + \frac1{2}  \|\nabla u_\rho(t)\|^2
	+ \int_{0}^{t}\langle f(u_\rho(s)) ,u_\rho'(s)\rangle \ds \\&=
  \frac{\rho}{2}  \|u_1\|^2  + \frac1{2} \|\nabla
  u_0\|^2  ,
\end{align*}
whence we obtain \eqref{eq:viscousest1}-\eqref{eq:viscousest2} by using the assumptions of Section \ref{sec_assumptions}.


From \eqref{eq:viscousest1}-\eqref{eq:viscousest3}, up to not
relabeled subsequences, we deduce the following convergences:
\begin{align}
	u_{\rho}  &  \rightarrow \bar{u} \text{ weakly in }W^{1,p}(0,T;V)\text{ and
		strongly in } C([0,T]; V),\label{convurho2} \\
	u_{\rho}  &  \rightarrow \bar{u} \text{ weakly star in }L^{\infty}(0,T;X) \text{,} \label{convuxrho2}\\
	\xi_{\rho}  &  \rightarrow \bar\xi\text{ weakly in }L^{p'%
	}(0,T;V^{\ast})\text{,}\label{c1rho2}\\
	\rho u_\rho''&  \rightarrow \bar\zeta \text{ weakly in }%
	L^{2}(0,T;X^{\ast})+L^{p'}(0,T;V^{\ast}) \text{,}\label{c2rho2}\\
	\eta_\rho  &  \rightarrow \bar\eta \text{ weakly in }L^{2}\left(
	0,T;X^{\ast}\right).\label{c3rho2}
\end{align}
The initial condition $\bar{u}(0)=u_0$ follows directly from \eqref{convurho2}.
The identification of the limit $\bar\zeta \equiv 0$ can be obtained as in Section \ref{sec_causallimit}.
Using the same argument of Section \ref{sec_causallimit}, we can also
deduce that  $\bar\eta=- \Delta \bar{u}  + f(\bar{u})$. 
It just remains to identify the limit $\bar\xi$. 
In order to achieve this, we first consider \eqref{ineqcausal}, namely
\begin{align*} 
	\int_{0}^{T} \int_{0}^{t} \langle \nu \xi_\rho (s), u_\rho'(s)- u_1 \rangle \ds \dt 
	\leq   
	-\int_0^T \phi(u_\rho(t))\dt  + T\phi(u_0) + \int_{0}^T  \int_0^t \langle \eta_\rho(s), u_1 \rangle_X \ds \dt \notag 	
\end{align*}
due to \eqref{eqidenxi}. Then, convergences \eqref{convuxrho2} and \eqref{c3rho2} ensure that
\begin{align*} 
	&\limsup_{\rho \rightarrow 0}\int_{0}^{T} \int_{0}^{t} \nu \langle \xi_\rho (s), u_\rho'(s)- u_1 \rangle \ds \dt \\
	&\leq   
	-\int_0^T \phi(\bar{u} (t))\dt  + T\phi(u_0) + \int_{0}^T  \int_0^t \langle \bar\eta(s), u_1 \rangle \ds \dt \\
	& \leq  - \int_{0}^T  \int_0^t \langle \bar\eta(s), \bar{u}' (s) - u_1 \rangle_X \ds \dt = \int_{0}^T  \int_0^t \nu \langle
   \bar\xi(s), \bar{u}' (s) - u_1 \rangle_X \ds \dt ,
\end{align*}
where we used \eqref{eq:limitrho}. 
From the demiclosedness of the maximal monotone operator $u
\rightarrow \mathrm{d}_V \psi(u(\cdot))$ in ${\mathcal V} \times
{\mathcal V}^*$, we can conclude by Remark \ref{rem:limsup} and Lemma \ref{lemma:doublelimsup}
 that $\bar\xi(t)$ coincides with $\mathrm{d}_V \psi (\bar{u} '(t))$ for a.a. $t \in (0,T)$.
\end{proof}

\begin{remark}
	Observe that, since we do not assume $f$ to be Lipschitz
        continuous, solutions $u_\rho$ to \eqref{prob1}-\eqref{prob5}
        (as well as solutions $\bar{u}$ to
        \eqref{probvisc1}-\eqref{probvisc4}) may be not unique.
	For this reason, there might exist solutions $u_\rho$ to \eqref{prob1}-\eqref{prob5} which cannot be recovered by means of the WIDE approach, namely, that are not limits as $\varepsilon \rightarrow 0$ of sequences of solutions to the regularized problem \eqref{euler:lambda}-\eqref{euler:4}.
	Note that Theorem \ref{viscous_thm}.ii establishes the convergence (with respect to the weak topology of $\mathcal{Y}$ and up to a subsequence) of any solution $(u_\rho, \xi_\rho, \eta_\rho) \in \mathcal{Y} $ to \eqref{prob1}-\eqref{prob5}  towards a solution $(\bar{u}, \bar{\xi}, \bar{\eta})$ to \eqref{probvisc1}-\eqref{probvisc4} as $\rho \rightarrow 0$. 
	This result implies that the same holds true for any $(u_\rho, \xi_\rho, \eta_\rho) \in \mathcal{Y} $ solution to \eqref{prob1}-\eqref{prob5} obtained as causal limit in Section \ref{sec_causallimit}. This could be proved also starting from the estimates derived in Subsection \ref{subsec_a priori estimates} which are uniform in $\rho>0$. 
\end{remark}

\section*{Acknowledgments}
G.A.~is supported by JSPS KAKENHI Grant Numbers JP21KK0044,
JP21K18581, JP20H01812 and JP20H00117 and also by the Research
Institute for Mathematical Sciences, an International Joint
Usage/Research Center located in Kyoto University.
A.M. is funded by the Deutsche Forschungsgemeinschaft (DFG, German Research Foundation) under Germany’s Excellence Strategy - GZ 2047/1, Projekt-ID 390685813.
U.S. is partially funded by the  Austrian
Science Fund grants 10.55776/I5149, 10.55776/F65, 10.55776/I4354, and 10.55776/P32788. 

\appendix

\section{Proofs of \eqref{aaa}}\label{apdx}

Integrating both sides of \eqref{euler:lambda} over $(t,T)$ and using \eqref{euler:3} and \eqref{euler:4}, we deduce that
\begin{align*}
\rho \varepsilon^2 u'''_{\varepsilon \lambda \mu}(t)
&= 2 \rho \varepsilon u''_{\varepsilon \lambda \mu}(t)
 + \rho u'_{\varepsilon \lambda \mu}(T) - \rho u'_{\varepsilon \lambda \mu}(t) + \int^T_t \eta_{\varepsilon \lambda \mu}(\tau) \, \d \tau\\
&\quad + \varepsilon \xi_{\varepsilon \lambda \mu}(t) + \int^T_t \xi_{\varepsilon \lambda \mu}(\tau) \, \d \tau\\
&\to 2 \rho \varepsilon u''_{\varepsilon \lambda}(t)
 + \rho u'_{\varepsilon \lambda}(T) - \rho u'_{\varepsilon \lambda}(t) + \int^T_t \eta_{\varepsilon \lambda}(\tau) \, \d \tau\\
&\quad + \varepsilon \xi_{\varepsilon \lambda}(t) + \int^T_t \xi_{\varepsilon \lambda}(\tau) \, \d \tau =: z(t) \quad \mbox{ strongly in } X^*
\end{align*}
as $\mu \to 0_+$ for each $t \in [0,T]$. On the other hand, we recall that
$$
u'''_{\varepsilon \lambda \mu} \to u'''_{\varepsilon \lambda} \quad \mbox{ weakly in } W^{1,2}(0,T;X^*) 
$$
as $\mu \to 0_+$. Therefore we have
$$
z(t) = \rho \varepsilon^2 u'''_{\varepsilon \lambda}(t) \quad \mbox{ for } \ t \in [0,T],
$$
which implies
$$
u'''_{\varepsilon \lambda \mu}(t) \to u'''_{\varepsilon \lambda}(t) \quad \mbox{ weakly in } X^*
$$
for each $t \in [0,T]$, and hence, \eqref{euler:3l} particularly follows.

Repeating the same argument above with $X^*$ replaced by $X^* + L^q(\Omega)$, one can also verify \eqref{bbb}.

\end{document}